\newtheorem{theorem}{Theorem}
\theoremstyle{remark}
\newtheorem{rem}{Remark}
\theoremstyle{remark}
\newtheorem{step}{Step}
\DeclareMathOperator{\sech}{sech}
\begin{document}

\title[Yang-Mills field on an asymptotically  hyperbolic space]{Global dynamics of a Yang-Mills field \\on an asymptotically  hyperbolic space}
\author{Piotr Bizo\'n}
\address{Institute of Physics, Jagiellonian
University, Krak\'ow, Poland\\ and
%\curraddr{
Max Planck Institute for Gravitational Physics (A. Einstein Institute),
Golm, Germany}
\email{piotr.bizon@aei.mpg.de}

\author{Patryk  Mach}
\address{Institute of Physics, Jagiellonian
University, Krak\'ow, Poland}
\email{patryk.mach@uj.edu.pl}
%\date{\today}
% ----------------------------------------------------------------
\begin{abstract}
We consider a spherically symmetric (purely magnetic) $SU(2)$ Yang-Mills  field propagating on an ultrastatic  spacetime with two asymptotically hyperbolic regions connected by a throat of radius $\alpha$. Static solutions in this model are shown to exhibit an interesting bifurcation pattern in the parameter $\alpha$. We relate this pattern  to the Morse index of the static solution with maximal energy. Using a hyperboloidal approach to the initial value problem, we describe the relaxation to the ground state  solution for generic initial data and unstable static solutions for  initial data of codimension one, two, and three.
\end{abstract}
\maketitle

\section{Introduction and setup}
The evolution of a Yang-Mills  field in a four-dimensional Minkowski spacetime is rather uneventful; all  solutions starting from smooth initial data at $t=0$  remain smooth~forever \cite{em} and decay to zero as $t\rightarrow\infty$ \cite{ch,bcr}. In a globally hyperbolic four-dimensional curved spacetime the evolution can be more intricate: although singularities cannot develop \cite{cs}, there may exist nontrivial stationary attractors
    giving rise to critical phenomena at the  boundaries of their basins of attraction, as was shown for the Schwarzschild background \cite{brz}.

    Here we consider the evolution of a spherically symmetric $SU(2)$ Yang-Mills field on an ultrastatic  spacetime $(\mathcal{M}, g)$ with the manifold $\mathcal{M}=\{(t,r)\in \mathbb{R}^2, (\vartheta,\varphi)\in S^2\}$ and metric
\begin{equation}\label{g}
g=-dt^2+dr^2+\alpha^{2} \cosh^2{r}\,d\omega^2\,,
\end{equation}
where $d\omega^2=d\vartheta^2+\sin^2{\!\vartheta}\, d\varphi^2$ is the round metric on the unit 2-sphere. The hypersurfaces $t=\text{const}$  are three-dimensional asymptotically hyperbolic cylinders that are symmetric under the reflection $r\rightarrow -r$. The neck, $r=0$, of this hyperbolic wormhole is a minimal surface of area $4\pi \alpha^2$. The Ricci scalar of \eqref{g} is equal to $R(g)=-6+(2+2\alpha^{-2}) \sech^2{r}$.

 We are interested in an $SU(2)$ Yang-Mills field propagating on $(\mathcal{M}, g)$.  The gauge potential $A_{\mu}=A_{\mu}^a \tau_a$ takes values in the Lie algebra $su(2)$, where the generators $\tau_a$   satisfy $[\tau_a,\tau_b]=i \epsilon_{abc} \tau_c$. In terms of the Yang-Mills field tensor, $F_{\mu\nu}=\nabla_{\mu} A_{\nu} - \nabla_{\nu} A_{\mu} +[A_{\mu}, A_{\nu}]$, the lagrangian density reads
\begin{equation}\label{action}
  \mathcal{L} = \mathrm{Tr}\left(F_{\alpha\beta} F_{\mu\nu} g^{\alpha\mu} g^{\beta\nu}\right)\, \sqrt{-\mathrm{det}(g_{\mu\nu})}.
\end{equation}
For the  Yang-Mills potential we assume the  spherically symmetric purely magnetic ansatz
\begin{equation}\label{A}
  A = W(t,r)\, \eta +\tau_3 \cos{\vartheta} d\varphi\,,\quad\mbox{where} \quad \eta=\tau_1 d\vartheta+\tau_2 \sin{\vartheta} \,d\varphi\,,
\end{equation}
 which gives the Yang-Mills field tensor
\begin{equation}\label{F}
  F=\partial_{t} W dt\wedge \eta + \partial_r W dr\wedge\eta -(1-W^2)\, \tau_3 \,d\vartheta \wedge \sin{\vartheta} \,d\varphi\,.
\end{equation}
Note that the vacuum state $W = \pm 1$ is two-fold degenerate.
Inserting ansatz \eqref{F} into \eqref{action} we get the reduced lagrangian density
\begin{equation}\label{red_action}
 \mathcal{L}=-\frac{1}{2} \left(\partial_{t} W\right)^2 +\frac{1}{2} \left(\partial_{r} W\right)^2 +\frac{(1-W^2)^2}{4 \alpha^2 \cosh^2{r}} \,.
\end{equation}
  Hereafter it is convenient to define the constant $\ell$ by $\ell(\ell+1)=\alpha^{-2}$. Then,  the Euler-Lagrange equation derived from \eqref{red_action} reads
\begin{equation}\label{eqym}
  \partial_{tt} W=\partial_{rr} W+\frac{\ell(\ell+1)}{\cosh^2{r}} \, W (1-W^2)\,,
\end{equation}
and the  associated conserved energy is
\begin{equation}\label{energy-r}
  E=\frac{1}{2} \int_{-\infty}^{\infty} \left((\partial_t W)^2 + (\partial_r W)^2 + \frac{\ell(\ell+1)}{2\cosh^2{r}}\, (1-W^2)^2\right) \, dr\,.
\end{equation}
The Yang-Mills equation \eqref{eqym} is a bona fide $1+1$ dimensional semilinear wave equation for which
 it is routine to show that solutions  starting at $t=0$ from smooth finite-energy initial data remain smooth for all future times\footnote{As we have mentioned above,  global regularity holds for the Yang-Mills equation on any four dimensional globally hyperbolic spacetime \cite{cs}, however  the proof of this fact is highly nontrivial already in  Minkowski spacetime \cite{em}.}.

 The goal of this paper is to describe the asymptotic behaviour of solutions for $t\rightarrow \infty$.
 Due to the  dissipation of energy by dispersion,  solutions are expected to settle down to critical points of the potential energy,
  i.e. static
 solutions. Before studying the evolution, in the following two sections we describe the static sector of the model.
\section{Static solutions}
Time-independent solutions $W=W(r)$ of Eq.\eqref{eqym} satisfy the ordinary differential equation
\begin{equation}\label{eqs}
  W''+\frac{\ell(\ell+1)}{\cosh^2{r}}\, W (1-W^2)=0\,.
\end{equation}
Due to the  reflection symmetry $W\rightarrow -W$, all solutions (except the reflection invariant solution $W_*=0$) come in pairs. In the following, each pair $\pm W$  will be counted as one solution. Besides $W_*$, the second constant solution is  $W_0=1$ which is the ground state with zero energy. These two constant solutions will play the key role in our analysis\footnote{Note
that, in contrast to the Yang-Mills model in a flat space, $W_*$ is a smooth, finite energy solution; it is a hyperbolic analogue of Wheeler's ``charge without charge'' configuration \cite{w} with two magnetic charges of opposite signs $\pm 1$ sitting at $r=\pm \infty$.}.

We will show below that as  $\ell$ increases from zero, nonconstant  static solutions with finite energy
 bifurcate from $W_*$ at each positive integer value of $\ell$. These solutions are either odd or even so it is sufficient to analyze them for $r\geq 0$. The odd solutions are parametrized by $W'(0)$ and the even solutions are parametrized by $W(0)$.
 Near $r=\infty$ these solutions belong to the one-parameter family of solutions
\begin{equation}\label{c-inf}
  W(r)=W_{\infty}+\mathcal{O}(e^{-2r})\,,
\end{equation}
which are analytic in the parameter $W_{\infty}$ and $e^{-2r}$. Smooth solutions of Eq.~\eqref{eqs} satisfying \eqref{c-inf} will be referred to as regular solutions.
\begin{theorem}
Let $n$ be a nonegative integer. For $2n<\ell<2n+2$ there are exactly $n$ regular even solutions $W_{2n}(r)$  and for $2n+1<\ell<2n+3$ there are exactly $n$ regular odd solutions $W_{2n+1}(r)$ (the subscript of $W$  counts the number of zeros of $W(r)$ on the real line).
\end{theorem}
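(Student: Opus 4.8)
The plan is to recast \eqref{eqs} in the variable $x=\tanh r\in(-1,1)$ (with primes below denoting $d/dx$), under which it becomes the nonlinear Legendre equation $[(1-x^2)W']'+\ell(\ell+1)\,W(1-W^2)=0$, and ``regular'' now means analytic at the singular endpoints $x=\pm1$. The reflection symmetries $W\mapsto-W$ and $x\mapsto-x$ let me restrict to $[0,1)$, imposing $W'(0)=0$ for even solutions and $W(0)=0$ for odd ones, together with regularity at $x=1$. By the expansion \eqref{c-inf}, the solutions regular at $x=1$ form a one-parameter family labelled by $W_\infty=W(1)$, so I would shoot inward from $x=1$ and read off even and odd solutions as the zeros of $W'(0;W_\infty)$ and of $W(0;W_\infty)$ respectively, letting $W_\infty$ range over $(0,1)$, whose two endpoints are the constant solutions $W_*=0$ and $W_0=1$.

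First I would pin down the candidate bifurcation values of $\ell$ by linearising at $W\equiv0$. The linearised equation is exactly the Legendre equation, whose unique solution regular at $x=1$ (normalised by $P_\ell(1)=1$) is $P_\ell(x)$; this is regular at $x=-1$ as well precisely when $\ell$ is a nonnegative integer, and then $P_k$ has $k$ simple interior zeros and parity $(-1)^k$. This singles out the integers as the only values of $\ell$ at which the trivial branch is degenerate, and shows that a branch emerging at $\ell=k$ carries $k$ nodes and parity $(-1)^k$, matching the subscript convention of the statement.

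The heart of the argument is a global nodal count. For $W_\infty\in(0,1)$ I would establish an a priori bound confining the trajectory to a compact region, so that $W(\cdot;W_\infty)$ reaches $x=0$ and depends continuously on $W_\infty$, and its Pr\"ufer rotation angle $\theta(x;W_\infty)$ is well defined; even solutions are then the values of $W_\infty$ with $W'(0)=0$ and odd solutions those with $W(0)=0$, i.e.\ where $\theta(0;W_\infty)$ meets one of two interleaved grids. Near $W_\infty=0$ the trajectory is $W\approx W_\infty P_\ell(x)$, which fixes the initial rotation through the zeros of $P_\ell$, while near $W_\infty=1$ it limits to the vacuum; Sturm comparison in $\ell$ then shows the accumulated rotation is a monotone step function of $\ell$ that increments exactly at the integers. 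Counting the grid crossings of $\theta(0;\cdot)$ over $(0,1)$ and matching parity should yield exactly one new even (resp.\ odd) solution as $\ell$ passes each even (resp.\ odd) integer, with the $k$-th solution carrying $k$ nodes, which is the asserted count.

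The step I expect to be the main obstacle is making this shooting argument simultaneously global and sharp: proving the a priori bound that forbids the trajectory from escaping past the vacuum $|W|=1$, where the nonlinearity changes sign and solutions can blow up; establishing \emph{strict} monotonicity of the rotation so that each admissible nodal class is realised \emph{exactly once}, which is what upgrades existence to the precise count and rules out spurious solutions; and controlling the degenerate endpoints of the parameter interval. The latter is genuinely delicate, since the linear zero modes $P_k(\tanh r)$ sit at the threshold of the continuous spectrum---they approach nonzero constants rather than decaying---so the familiar discrete-eigenvalue bifurcation picture must be replaced by an analysis in the space of functions with prescribed limits at $r=\pm\infty$, and the lowest branch, where the count first turns on, has to be handled separately.
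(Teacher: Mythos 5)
Your route---shooting \emph{inward} from null infinity in the parameter $W_\infty$, with a Pr\"ufer-angle nodal count and bifurcation-at-integers input from the Legendre linearization---is genuinely different from the paper's proof, which shoots \emph{outward} from $r=0$ in the parameter $b=W'(0)$ (resp.\ $W(0)$) and produces $W_1, W_3,\dots$ as infima of explicitly described sets of monotone behaviors ($B_1$, $B_3$, \dots), with the oscillation count coming from the same small-amplitude Legendre analysis that you perform near $W_\infty=0$. One of your two flagged obstacles in fact dissolves, and you could import the paper's tool to close it: the functional $Q(r)=\cosh^2 r\, W'^2-\tfrac{\ell(\ell+1)}{2}(1-W^2)^2$ satisfies $Q'=\sinh(2r)\,W'^2\ge 0$ for $r\ge 0$, so for a solution in the family \eqref{c-inf} with $|W_\infty|<1$ (for which $\cosh^2 r\,W'^2\to 0$) one gets $Q(r)\le \lim_{r\to\infty}Q=-\tfrac{\ell(\ell+1)}{2}(1-W_\infty^2)^2<0$ for \emph{all} $r\ge 0$; hence $|W|<1$ and $\cosh^2 r\,W'^2$ is bounded along the whole inward trajectory, which therefore exists down to $x=0$ and depends continuously on $W_\infty$. (Convexity of $W$ where $W>1$ similarly rules out regular solutions with $|W_\infty|>1$, so your restriction to $W_\infty\in(0,1)$ is harmless.)

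The genuine gap is the exactness step. Your scheme reduces ``exactly $n$'' to the claim that $\theta(0;W_\infty)$ crosses each grid value exactly once, i.e.\ strict monotonicity (or at least injectivity at crossings) of the rotation in the \emph{shooting parameter} $W_\infty$ at fixed $\ell$. You flag this but supply no mechanism, and the one tool you invoke---Sturm comparison in $\ell$---is aimed at the wrong variable: it controls the $\ell$-dependence of the \emph{linearized} equation, whereas here the effective potential $\ell(\ell+1)(1-W^2)$ seen by the Pr\"ufer angle depends on the unknown solution itself, and there is no Sturm theory in a nonlinear shooting parameter. Nothing in your argument prevents $\theta(0;\cdot)$ from recrossing a grid line, which would create several solutions in the same nodal class; the monotonicity you need is essentially equivalent to the uniqueness assertion being proved. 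As written, your continuity/degree count between the computable limits $W_\infty\to 0$ (where $W\approx W_\infty P_\ell$) and $W_\infty\to 1$ (vacuum) yields \emph{at least} $n$ solutions of each parity---a valid alternative existence proof---but not the ``exactly.'' The paper sidesteps rotation monotonicity entirely by extracting each $W_{2n+1}$ as the infimum of a set of parameters with prescribed monotone behavior (and its exactness claim rests on the interval structure of those sets, not on any monotone winding), so if you want the sharp count along your route you must either prove the injectivity of the nodal map in $W_\infty$ or graft on an argument of the paper's type.
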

\begin{proof}
We consider only the odd solutions (the proof of existence of even solutions is analogous).
The odd solution with $b=W'(0)$ will be denoted by $W(r,b)$. Without loss of generality we assume that $b\geq 0$.
The proof, following the lines of the shooting argument given in \cite{b1}, proceeds in three steps:
\begin{step}[a priori global behavior]
 It follows from \eqref{eqs} that $W(r)$ cannot have  a maximum (resp.~minimum) for $W>1$ (resp.~$W<-1$), so once  $W(r,b)$   leaves the strip $|W|<1$, it cannot reenter it. Suppose that $|W(r,b)|<1$ for all $r$. We define the functional
\begin{equation}\label{Q}
  Q(r)=\cosh^2{r}\, W'^2-\frac{\ell(\ell+1)}{2} (1-W^2)^2\,.
\end{equation}
From Eq.~\eqref{eqs} we have
\begin{equation}\label{dQ}
Q'(r)=\sinh(2r) \, W'^2\,,
\end{equation}
hence $Q(r)$ is increasing for $r\geq 0$. It is easy to see that if $Q(r_0)>0$ for some $r_0$, then $|W(r_1,b)|=1$ for some $r_1>r_0$. Thus, $Q(r)<0$ for all $r\geq 0$, in particular  $Q(0)=2 b^2-\ell(\ell+1)<0$. Together with Eq.~\eqref{dQ} this implies that $\lim_{r\rightarrow\infty} Q(r)\leq 0$ exists, and therefore $\lim_{r\rightarrow \infty} Q'(r)=0$, which is equivalent to
$\lim_{r\rightarrow \infty} e^{r} W'(r)=0$. The existence of $\lim_{r\rightarrow\infty} Q(r)$, and \eqref{Q} imply in turn that  $W_{\infty}:=\lim_{r\rightarrow \infty} W(r)$ exists. Concluding, the solution for which $|W(r,b)|<1$  for all $r$ has the desired asymptotic behavior \eqref{c-inf}.
\end{step}
\begin{step}[behavior of solutions for small $b$]
 Inserting $W(r)=b w(r)$  into Eq.~\eqref{eqs} and taking the  limit $b\rightarrow 0$ we get
\begin{equation}\label{eq-v}
  w''+\frac{\ell(\ell+1)}{\cosh^2{r}}\, w =0,\quad w(0)=0, \quad w'(0)=1\,.
\end{equation}
The solution of  this limiting equation
 is given by the Legendre function
\begin{equation}\label{hyper}
w_L(r)=\tanh{r}\, {}_2 F_1\left(\frac{1-\ell}{2},\frac{\ell+2}{2},\frac{3}{2}; \tanh^2{r}\right)\,,
\end{equation}
which oscillates $n+1$ times around zero (i.e., has $n+1$ zeros) for $r>0$, where $2n+1$ is the smallest odd number less than $\ell$,  and then diverges to $(-1)^{n+1} \infty$ (or goes to $(-1)^{n+1}$, if $2n+3=\ell$). By continuity,  for $b$  small enough, $W(r,b)$ behaves in the same manner on compact intervals.
\end{step}
\begin{step}[shooting argument]
We define the set
\begin{equation*}\label{B1}
  B_1=\{
  b\,|\, W(r,b)\,\, \mbox{grows monotonically to}\,\, W(r_0,b)=1 \,\,\mbox{at some}\,\, r_0\}\,.
\end{equation*}
We know from Step 1 that the set $B_1$ is nonempty because all solutions $W(r,b)$ with $b>\sqrt{\ell(\ell+1)/2}$ belong to it. On the other hand, we know  from Step~2 that if $\ell>1$ then the set $B_1$ is bounded from below by a positive constant. Thus, $b_1=\inf B_1$  is strictly positive. The solution $W(r,b_1)$ cannot touch $W=1$ for a finite $r$ because the same would be true for nearby orbits, violating the definition of $b_1$.
Therefore, $|W(r,b)|<1$ for all $r$ and hence, by Step~1, $W(r,b_1)$ is the desired first regular odd solution with one zero. We denote it by $W_1$.

Next, consider the solution $W(r,b_1-\varepsilon)$  for a small positive $\varepsilon$. From the definition of $b_1$ it follows that $W(r,b_1-\varepsilon)$ attains a local maximum $W(r_0)<1$ for some $r_0$. It is not difficult to show that for sufficiently small $\varepsilon$ the solution $W(r,b_1-\varepsilon)$, after reaching the maximum at $r_0$, decreases monotonically to   $W(r_1)=-1$ for some $r_1>r_0$ (we omit the proof of this fact because it is very similar to the analogous proof given in \cite{b1}). This means that the set
\begin{align*}
& B_3=\{
  b\,|\, W(r,b)\,\, \mbox{grows monotonically to a maximum at some $r_0$ and } \\&\mbox{then monotonically decreases to} \,\, W(r_1,b)=-1\,\,\mbox{at some} \,\, r_1>r_0\}
\end{align*}
is nonempty. We know from Step~2 that if $\ell>3$, then $b_3=\inf B_3$ is strictly positive.  By the same argument as above, the solution $W(r,b_3)$ stays in the strip $|W|<1$ for all $r$ and gives the desired regular odd solution with three zeros. We denote it by  $W_3(r)$.
\end{step}
The subsequent solutions $W_{2n+1}(r)$ can be obtained by iterating the argument as long as $2n+1<\ell$.
\end{proof}
The first few solutions $W_n$  computed numerically are shown in Fig.~\ref{fig1}.

\begin{figure}[h]
\includegraphics[width=0.49\textwidth]{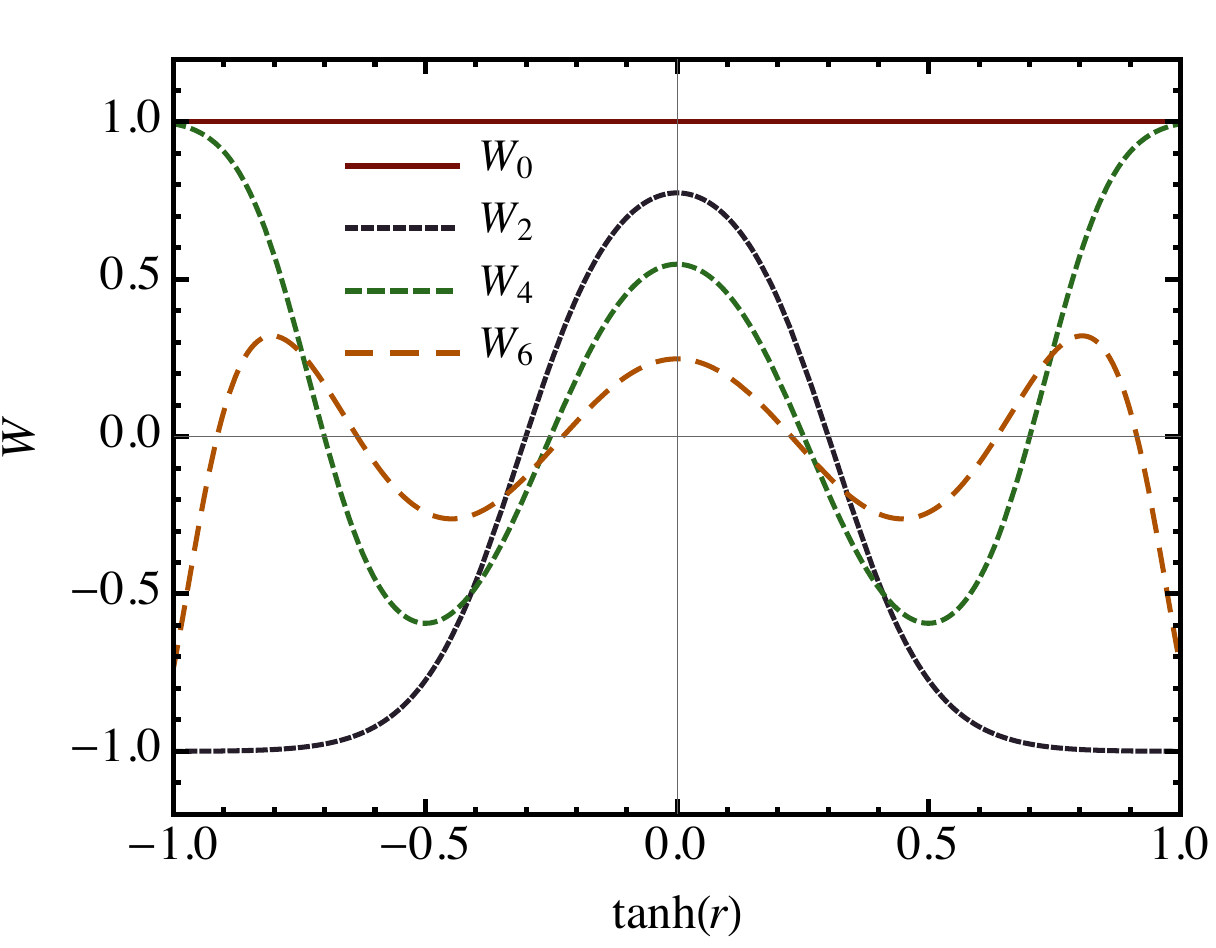}
\includegraphics[width=0.49\textwidth]{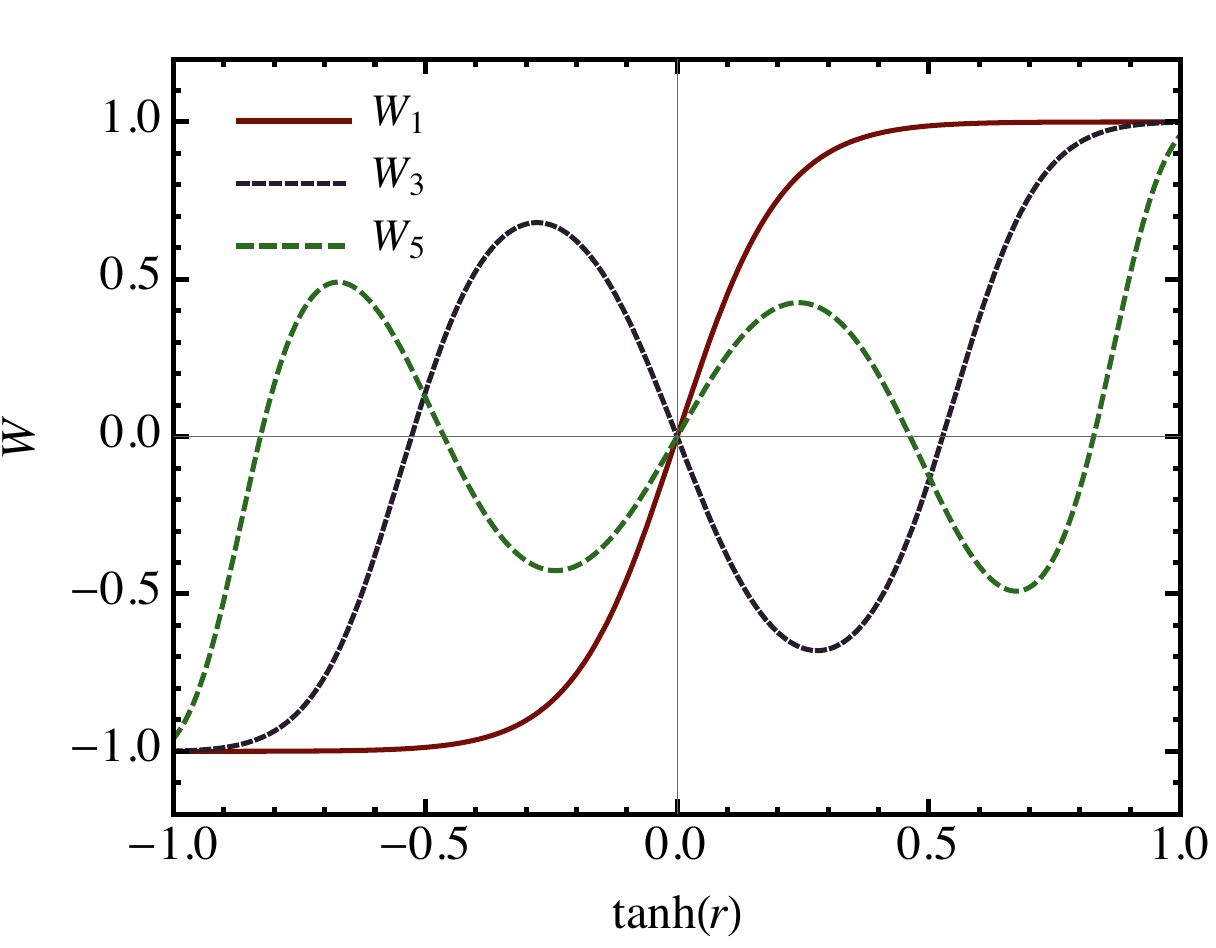}
\captionsetup{width=\textwidth}
\caption{\label{fig1}{\small Static solutions $W_n$ for $\ell=6.5$. The $r$-axis is compactified to the interval $[-1,1]$ by using $\tanh{r}$.}}
\end{figure}

We conclude this section with a few remarks concerning the properties of static solutions.
\begin{rem}
Multiplying Eq.~\eqref{eqs} by $W$ and integrating by parts we get the virial identity
    \begin{equation}\label{virial}
      \int_{-\infty}^{\infty} W'^2 dr= \ell(\ell+1)  \int_{-\infty}^{\infty} W^2 (1-W^2) \sech^2{r} \,dr.
    \end{equation}
    Inserting this into Eq.~\eqref{energy-r} gives for static solutions
\begin{equation}\label{e-virial}
      E= \frac{\ell(\ell+1)}{4}  \int_{-\infty}^{\infty}  (1-W^4) \sech^2{r}\,dr\,.
    \end{equation}
    This shows that the energies $E_n:=E(W_n)$ are bounded from above by the energy $E_*:=E(W_*)=\ell(\ell+1)/2$ (see Fig.~\ref{fig2}).
    \begin{figure}[h]
    \includegraphics[width=0.495\textwidth]{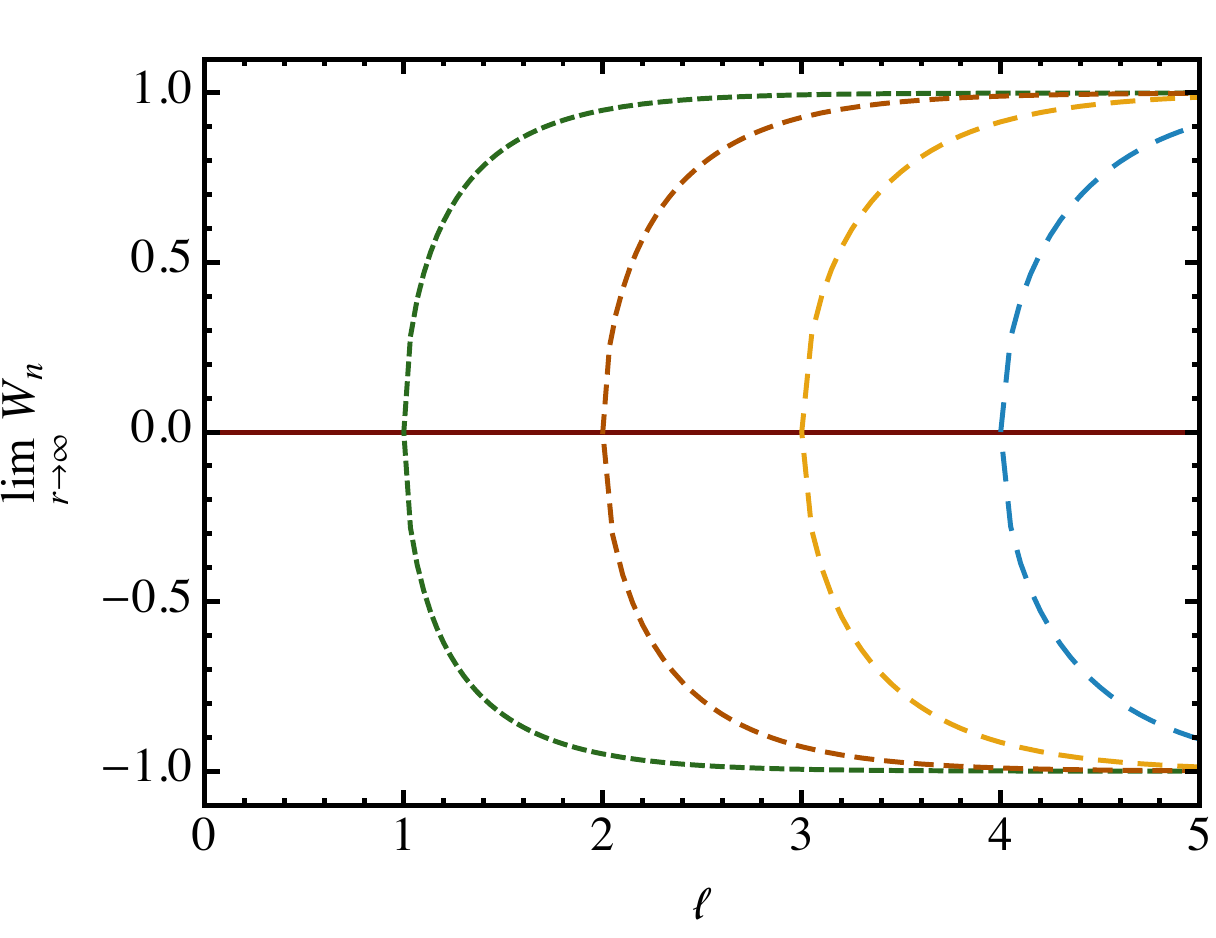}
\includegraphics[width=0.485\textwidth]{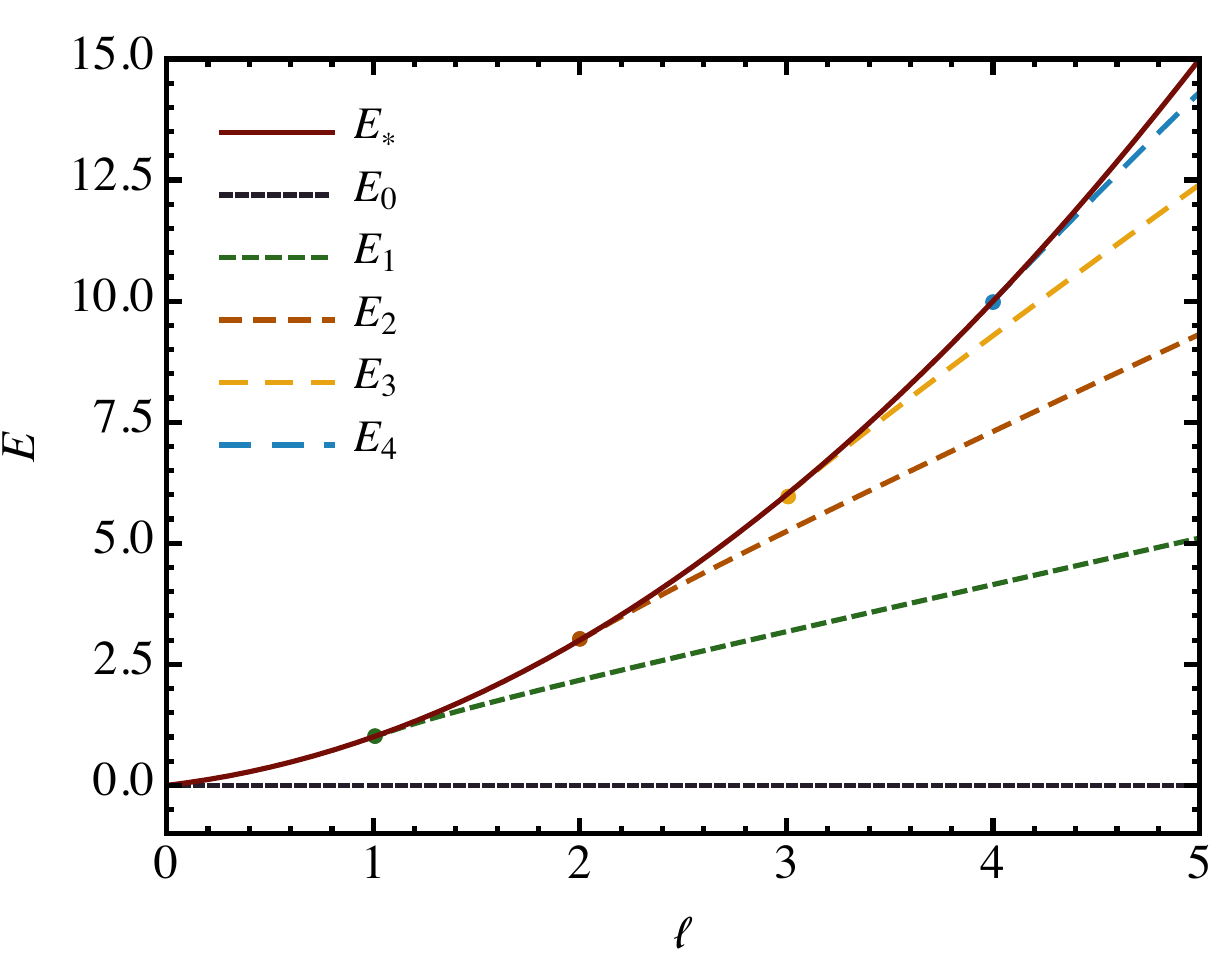}
\captionsetup{width=\textwidth}
\caption{\label{fig2} {\small Left panel: The bifurcation diagram showing a sequence of supercritical pitchfork bifurcations at integer values of $\ell$. Right panel: The energies $E_n$ are shown to bifurcate from  the parabola $E_*=\ell(\ell+1)/2$ at integer values of $\ell$. The bifurcation points are marked with dots.}}
\end{figure}
\end{rem}
\begin{rem}
Multiplying Eq.~\eqref{eqs} by $W'$ and integrating by parts we get an  identity
    \begin{equation}\label{ident}
      \int_{-\infty}^{\infty} \tanh{r} \sech^2{r} \,(1-W^2)^2\, dr= 0\,,
    \end{equation}
    which is trivially satisfied by odd and even solutions and suggests that non-symmetric solutions do not exist, however proving that is an open problem.
\end{rem}
\begin{rem}
For large  values of $\ell$ one can obtain an analytic approximation of the solution $W_1(r)$ as follows. Let $y=\sqrt{\ell(\ell+1)} r$ and $\bar W(y)=W(r)$. Then, in the limit $\ell\rightarrow \infty$ equation \eqref{eqs} reduces to
$\bar W''+\bar W (1-\bar W^2)=0$. The separatrix solution of this limiting equation  $\bar W_1(y)=\tanh(y/\sqrt{2})$ gives the   approximation
\begin{equation}\label{w1}
  W_1(r) \simeq \tanh\left(\sqrt{\frac{\ell(\ell+1)}{2}} r\right) \quad\mbox{for}\quad \ell\gg 1\,.
\end{equation}
\end{rem}
\begin{rem}
The  static energy
\begin{equation}\label{energy-s}
  E=\frac{1}{2} \int_{-\infty}^{\infty} \left( W'^2 + \frac{\ell(\ell+1)}{2\cosh^2{r}}\, (1-W^2)^2\right) \, dr\,
\end{equation}
is an example of an energy functional with the following properties: a) it is invariant under a discrete $Z_2$
 symmetry (here, the reflection $W\rightarrow -W$), b) the fixed point of this symmetry (here, $W_*$) is a critical point with maximal energy, c) it satisfies the Palais-Smale condition.
Corlette and Wald conjectured\footnote{Actually, this conjecture was not stated  explicitly in \cite{cw} but it follows naturally  from the argument given there.} in \cite{cw}, using Morse theory arguments,  that for such functionals the number of  critical points (counted without multiplicity) with energy below $E(W_*)$ is equal to the Morse index of $W_*$ (i.e. the number of negative eigenvalues of the Hessian of $E$ at $W_*$).
 We will show below that when $\ell<1$ the Morse index of $W_*$ is equal to $1$  and, as $\ell$ grows, it increases by one at each integer value of $\ell$.
 Thus, according to the conjecture of Corlette and Wald, for a given $\ell$ there should be exactly $n$ (besides $W_*$) critical points of the energy functional, where $n$ is the largest integer less than $\ell$. This is in perfect agreement with Theorem~1, provided that there are no non-symmetric critical points.
\end{rem}
\section{Linearized perturbations}
In this section we determine the linear stability properties of static solutions $W_n(r)$. This step is essential for understanding the role of static solutions in the evolution.
Following the standard procedure we seek solutions of Eq.~\eqref{eqym} in the form $W(t,r)=W_n(r)+w(t,r)$.
Hereafter, in the case of $W_*$ the subscript $n$ should be replaced by $*$.
Neglecting quadratic and cubic terms in $w$, we obtain the evolution equation for linear perturbations
  \begin{equation}\label{eq-linpert}
    \partial_{tt} w-\partial_{rr} w+V_n(r) \, w=0\,, \quad V_n(r)=\frac{\ell(\ell+1)}{\cosh^2{r}}\,(3 W_n^2(r)-1)\,.
\end{equation}
Separation of time dependence $w(t,r)=e^{\lambda t} v(r)$ yields  the eigenvalue problem for the one-dimensional Schr\"odinger operator
\begin{equation}\label{pert}
 L_n\, v:= \left(-\partial_{rr} +V_n(r)\right)\,v=\sigma\,v,\quad \sigma=-\lambda^2\,.
\end{equation}
  Since the potential $V_n(r)$ is an even function of~$r$, the eigenfunctions are alternately even and odd.
 We claim that the operator $L_n$ has exactly $n$ negative eigenvalues, independently of $\ell$. For $n=0$ this is obvious because the potential $V_0(r)=2\ell(\ell+1) \sech^{2}{r}$ is everywhere positive.  For  $n\geq 1$ we can obtain a lower bound as follows. Consider the function $v_n(r)=\cosh{r}\,W'_n(r)$. Multiplying Eq.~\eqref{eqs} by $\cosh^2{r}$ and then differentiating, we find that
   $ L_n\, v_n=-v_n$;
 hence $v_n$ is the eigenfunction to the eigenvalue $\sigma=-1$.
Since $W_n(r)$ has $(n-1)$ local extrema, the eigenfunction $v_n(r)$ has $(n-1)$ zeros which implies by
 the Sturm oscillation theorem  that there are exactly $(n-1)$ eigenvalues below~$-1$.  Consequently, the operator $L_n$  has at least $n$ negative eigenvalues. Numerics shows that there are no eigenvalues  in the interval $-1<\sigma<0$, indicating that the above lower bound is sharp but we have not been able to prove that (see, however, Remark~5 below).

  In the case of $W_*=0$ the potential $V_*=-\ell(\ell+1) \sech^2{r}$ is the exactly solvable P\"oschl-Teller potential \cite{dw} which is known to have $n+1$ negative eigenvalues $\sigma_j=-(\ell-j)^2$ for $j=0,1,\dots,n$, where $n$ is the largest integer less than $\ell$. In particular, for $\ell<1$ there is only one negative eigenvalue. At each positive integer value of $\ell$ a new zero energy resonance emerges from the bottom of the continuous spectrum and becomes a negative eigenvalue as $\ell$ grows.
\begin{rem}
From the structure of the spectrum of perturbations around $W_*$ it follows, using bifurcation theory, that at each positive integer value of $\ell$ there is a supercritical pitchfork bifurcation at which a pair of solutions $\pm W_n$ with $n=\ell$ is born. This gives an alternative ``soft'' argument for existence of solutions $W_n$ in some small intervals $n<\ell<n+\varepsilon$. In these intervals the solutions $W_n$ are guaranteed to have one unstable mode less than $W_*$, in agreement with  our conjecture above.
\end{rem}

  In order to understand the evolution in the neighbourhood of  static solutions, besides unstable modes,  one needs to determine the quasinormal modes. They are defined as solutions of the eigenvalue equation \eqref{pert} with $\mathrm{Re}(\lambda)<0$ and the outgoing wave conditions  $v(r)\sim  \exp(\mp \lambda r)$ for $r\rightarrow  \pm \infty$. As the concept of quasinormal modes is inherently related to the loss of energy by radiation, the unitary evolution \eqref{eq-linpert} and the associated  self-adjoint eigenvalue problem  \eqref{pert} do not provide a natural setting for analysing quasinormal modes, both from the conceptual and computational viewpoints. For this reason we postpone the discussion of quasinormal modes until the next section where a new non-unitary formulation will be introduced.
\section{Hyperboloidal formulation}
We define  new  coordinates
\begin{equation}\label{tau-x}
 \tau=t-\log(\cosh{r})\,,\quad x=\tanh{r}\,.
 \end{equation}
 Then metric \eqref{g} takes the form
\begin{equation}\label{gh}
  g=(1-x^2)^{-1}\, \hat g, \quad \hat g=-(1-x^2) d\tau^2-2x \, d\tau dx + dx^2 + \alpha^2\,d\omega^2\,.
\end{equation}
 The conformal metric $\hat g$ is  diffeomorphic to the Nariai metric \cite{n}, which is a product of the 2-dimensional de Sitter metric   and the round metric on the 2-sphere. It has constant positive scalar curvature $R(\hat g)=2+2/\alpha^2$. The manifold $\mathcal{M}$ is the static patch of dS$_2 \times S^2$ whose cosmological horizons (null hypersurfaces $x=\pm 1$) correspond to null infinities of $\mathcal{M}$.
 In $(\mathcal{M},\hat g)$ the spacelike hypersurfaces $\tau=$ const are three-dimensional cylinders $R\times S^2$.  Note that the Yang-Mills equation is conformally invariant in four dimensions, hence it takes the same form
on $(\mathcal{M},g)$ and $(\mathcal{M},\hat g)$.
 The hypersurfaces  $\tau=$ const are ``hyperboloidal'', that is they are spacelike hypersurfaces that approach the ``right'' future null infinity  ${\mathcal J}_R^+$ along  outgoing null cones of constant retarded time $t-r$ and the ``left'' future null infinity  ${\mathcal J}_L^+$ along outgoing null cones of constant advanced time $t+r$.

We recall that the hyperboloidal approach to the initial value problem was introduced by Friedrich  in his studies of asymptotically flat solutions of Einstein's equations \cite{f}. More recently, this approach has been developed and  applied  by Zengino\u{g}lu \cite{z1,z2}, who emphasized its advantages over the traditional approaches (see also \cite{mr}).
Until now, as far as we know, the hyperboloidal approach has not been used in the context of asymptotically hyperbolic spacetimes.

In terms of the coordinates $(\tau,x)$ Eq.~\eqref{eqym} becomes\footnote{Hereafter we  abuse notation and use the same letter  $W$ for the dependent variable  in coordinates $(t,r)$ and $(\tau,x)$.}
\begin{equation}\label{eq-sx}
  \partial_{\tau\tau} W+2x \partial_{\tau x} W + \partial_{\tau} W = \partial_x\left((1-x^2) \partial_x W\right)+ \ell(\ell+1) W(1-W^2)\,.
\end{equation}
The principal part of this hyperbolic equation degenerates at the endpoints $x=\pm 1$ to $\partial_{\tau} (\partial_{\tau}\pm 2\partial_x) W$, hence there are no ingoing characteristics at the boundaries and consequently  no boundary conditions are required or, for that matter,  allowed. This, of course, reflects the fact that no information comes in from the future null infinities.

Multiplying Eq.~\eqref{eq-sx} by $\partial_{\tau} W$ we obtain the local conservation law
\begin{equation}\label{cons}
 \partial_{\tau} \rho +\partial_x f=0\,,
\end{equation}
where
\begin{eqnarray}\label{cons2}
\rho&=& \frac{1}{2} \left((\partial_{\tau} W)^2 + (1-x^2)(\partial_x W)^2
  +\frac{\ell(\ell+1)}{2}\,(1-W^2)^2\right) \,,\\
f&=& x\,(\partial_{\tau} W)^2 - (1-x^2) \partial_{\tau} W \partial_x W\,.
\end{eqnarray}
Integrating the conservation law \eqref{cons} over a $\tau=$ const hypersurface  we get the energy balance
\begin{equation}\label{de}
  \frac{d \mathcal{E}}{d\tau} = -(\partial_{\tau} W(\tau,-1))^2-(\partial_{\tau} W(\tau,1))^2\,,
\end{equation}
where
\begin{equation}\label{energy}
  \mathcal{E}(\tau)= \int_{-1}^{1} \rho \,dx
\end{equation}
is the Bondi-type energy\footnote{Note that
$\partial_{\tau}=\partial_t$, hence for time-independent fields the Bondi-type energy $\mathcal{E}$ is equal to the standard energy $E$ defined in \eqref{energy-r}.}.
Formula \eqref{de} expresses the radiative loss of energy  through future null infinities. Since the energy $\mathcal{E}(\tau)$ is positive and monotone decreasing, it has a nonnegative limit for $\tau\rightarrow \infty$. It is natural to expect
that this limit is given by the energy of a static endstate of evolution\footnote{In order to turn this expectation into a proof, it would suffice to  show that the `kinetic
energy' $\frac{1}{2}\,\int_{-1}^{1} (\partial_{\tau} W)^2 \,dx$ must tend to zero as $\tau \rightarrow \infty$.
}.
 We thus see that the hyperboloidal approach is ideally suited for studying the relaxation processes which are governed by  the dispersive dissipation of energy.

 In the remainder of the paper we describe in detail the asymptotic behavior of solutions to Eq.~\eqref{eq-sx} for smooth, finite energy initial data. To this end, we first
return to the analysis of linearized perturbations and redo it in the hyperboloidal framework.
Substituting $W(\tau,x)=W_n(x)+w(\tau,x)$ into Eq.~\eqref{eq-sx} and linearizing, we obtain the equation
\begin{equation}\label{eq-sxl}
  \partial_{\tau\tau} w+2x \partial_{\tau x} w + \partial_{\tau} w = \partial_x\left((1-x^2) \partial_x w\right)+ \ell(\ell+1) (1-3W_n^2)\,w\,,
\end{equation}
 which for $w(\tau,x)=e^{\lambda \tau} u(x)$ yields the eigenvalue problem for the quadratic pencil of linear operators
\begin{equation}\label{pert-sx}
   (A_n   +\lambda\,B +\lambda^2 I)\, u =0\,,
\end{equation}
where
$$
A_n=-\partial_x \left((1-x^2) \partial_x\right) + \ell(\ell+1) (3 W_n^2(x)-1),\quad B=2x \partial_x +1\,.
$$
Here $A_n$ is the self-adjoint operator (corresponding to the operator $L_n$ defined in \eqref{pert}), while the operator $B$ is skew-symmetric. Thus, if $\lambda$ is an eigenvalue\footnote{Mind the  change of terminology: in the previous section  $\sigma=-\lambda^2$ was referred to as the eigenvalue while for the quadratic pencil $\lambda$ is called an eigenvalue.}, so is its complex conjugate $\bar \lambda$.
In this setting the quantization condition for the  eigenvalues amounts to the requirement that the eigenfunctions be  smooth at the endpoints $x=\pm 1$. Note that here, in contrast to the self-adjoint formulation from section~3, both unstable and stable (quasinormal) modes are treated on equal footing as genuine eigenfunctions\footnote{To our knowledge, the advantages of the  hyperboloidal foliations in the definition and analysis of   quasinormal modes were first pointed out by  Schmidt \cite{bernd}. More recently, this idea was implemented numerically in \cite{brz} and, in the case of asymptotically anti-de Sitter black holes, was independently developed rigorously by Warnick  in the framework of semigroup theory \cite{warnick}.}.

 For constant solutions, $W_*=0$ and $W_0=1$, one can solve the quadratic eigenvalue problem \eqref{pert-sx} using the power series method. In the case of $W_*$, inserting the power series
 \begin{equation}\label{power}
 u(x)=\sum_{j=0}^{\infty} c_j x^j
 \end{equation}
  into \eqref{pert-sx} we obtain the recurrence relation
\begin{equation}\label{recurrence*}
  c_{j+2}=\frac{(j+\lambda+\ell+1)(j+\lambda-\ell)}{(j+2)(j+1)}\, c_j\,,
\end{equation}
where $c_0=1$, $c_1=0$ for even solutions and $c_0=0$, $c_1=1$ for odd solutions.
Using the ratio test and the Gauss convergence criterion it is easy to see that the function defined by the power series \eqref{power} is  smooth for  $|x|< 1$, but it is not smooth at $x= \pm 1$ unless the series is truncated for a finite $j$. The truncation condition gives two sequences of eigenvalues
\begin{equation}\label{eigen-w*}
  \lambda_{*,j}^{\pm}=-j-\frac{1}{2}\pm (\ell+\frac{1}{2}),\qquad j=0,1,\dots
\end{equation}
The corresponding eigenfunctions $u_{n,j}^{\pm}$ are even and odd polynomials of order $j$ for even and odd $j$, respectively.
It follows from \eqref{eigen-w*} that $W_*$ has one (even) unstable mode $(\lambda_{*,0}^+=\ell,u_{*,0}^+=1)$, if $\ell<1$, and as $\ell$ increases, it picks up new unstable modes at each integer value of $\ell$.  Thus, for a given $\ell$ there are exactly $n+1$ unstable modes where $n$ is the largest
integer less than $\ell$.

In the case of $W_0$  the recurrence relation is
\begin{equation}\label{recurrence0}
  c_{j+2}=\frac{\lambda^2+(2+j)\lambda+j^2+j+2\ell(\ell+1)}{(j+2)(j+1)}\, c_j\,,
\end{equation}
and the truncation condition yields two sequences of eigenvalues
\begin{equation}\label{eigen-w0}
  \lambda_{0,j}^{\pm} =-j-\frac{1}{2}\pm \frac{1}{2} \sqrt{1-8\ell(\ell+1)},\qquad  j=0,1,\dots
\end{equation}
All the eigenvalues have a negative real part (in agreement with the analysis in section~3). An imaginary part is nonzero if  $\ell>(\sqrt{6}-2)/4$.

The power series method is not applicable to the  solutions $W_n$ with $n\geq 1$ because they are not known in closed form, hence in order to compute their spectrum of perturbations one has to resort to numerical  methods, for example the shooting method\footnote{In the case of $W_1$, the single unstable mode is even so  the least damped stable mode can be easily obtained by solving the linearized equation \eqref{eq-sxl} for some odd initial data and fitting an exponentially damped sinusoid to $w(\tau,x_0)$ at some~$x_0$.} (see \cite{bcr2}). In this case we shall adopt the following convention for the ordering of  eigenvalues
$$
\lambda_{n,0}> \dots > \lambda_{n,n-1}=1>0>\mathrm{Re}(\lambda_{n,n})>\mathrm{Re}(\lambda_{n,n+1})>\dots
$$

Summarizing, the following picture emerges from our analysis.  For a given $\ell$ there are $n+2$ static solutions $W_0, W_1,\dots,W_n, W_*$, where $n$ is the largest integer less that $\ell$. Due to the global-in-time regularity of solutions and monotone decrease of the energy $\mathcal{E}$, the static solutions are expected to be the only possible endstates of evolution for any  smooth, finite energy initial data.  Since the solution $W_n$ has $n$ unstable modes (or $n+1$ in the case of $W_*$), only $W_0$ is a generic attractor while the solutions $W_n$ with $n\geq 1$ are unstable attractors of codimension $n$ (or $n+1$ in the case of $W_*$).
 The analytic and numerical evidence supporting this picture will be given in the following section where we describe in detail  the dynamics  of convergence to the static attractors.
 \section{Relaxation to an equilibrium}
 In this section we solve Eq.~\eqref{eq-sx} numerically  for a variety of smooth, finite energy initial data and different values of $\ell$. We use the spectral Galerkin method which appears to be ideally suited to the problem at hand. The implementation of this method is presented in detail in  Appendix A. Our goal is to describe  quantitatively the process of relaxation to static solutions. We will discuss in turn the relaxation to the generic attractor $W_0$, the codimension-one attractors $W_*$ (for $\ell\leq 1$) or $W_1$ (for $\ell>1$), the codimension-two attractors $W_*$ (for $1<\ell<2$) or $W_2$ (for $\ell>2$), and the codimension-three attractors $W_*$ (for $2<\ell<3$) or $W_3$ (for $\ell>3$).

 \subsection{Relaxation to $W_0$}
 Generic initial data are expected to evolve towards the static solution $W_0$; our numerical results confirm this expectation. Fig.~\ref{fig3} shows the evolution for $\ell = 7/2$ of sample initial data
 \begin{equation}\label{ic0}
 W(0,x) = T_1(x) + T_4(x) + T_5(x), \quad \partial_{\tau} W(0,x) = 0\,.
 \end{equation}
  Here and in the following, $T_n(x)$ denotes the $n$-th Chebyshev polynomial. We choose initial data as a simple combination of Chebyshev polynomials because they can be easily implemented in the framework of our numerical method. Note that the first two eigenfunctions around $W_0$ and $W_*$ are given by the first two Chebyshev polynomials: $u_{0,0}^{\pm}=u_{*,0}^{\pm}=T_0\equiv 1$ and $u_{0,1}^{\pm}=u_{*,1}^{\pm}=T_1\equiv x$. This fact will be very convenient in the following, however it should be remembered that it is specific to our choice of the hyperboloidal coordinates \eqref{tau-x}.
\begin{figure}
\includegraphics[width= \textwidth]{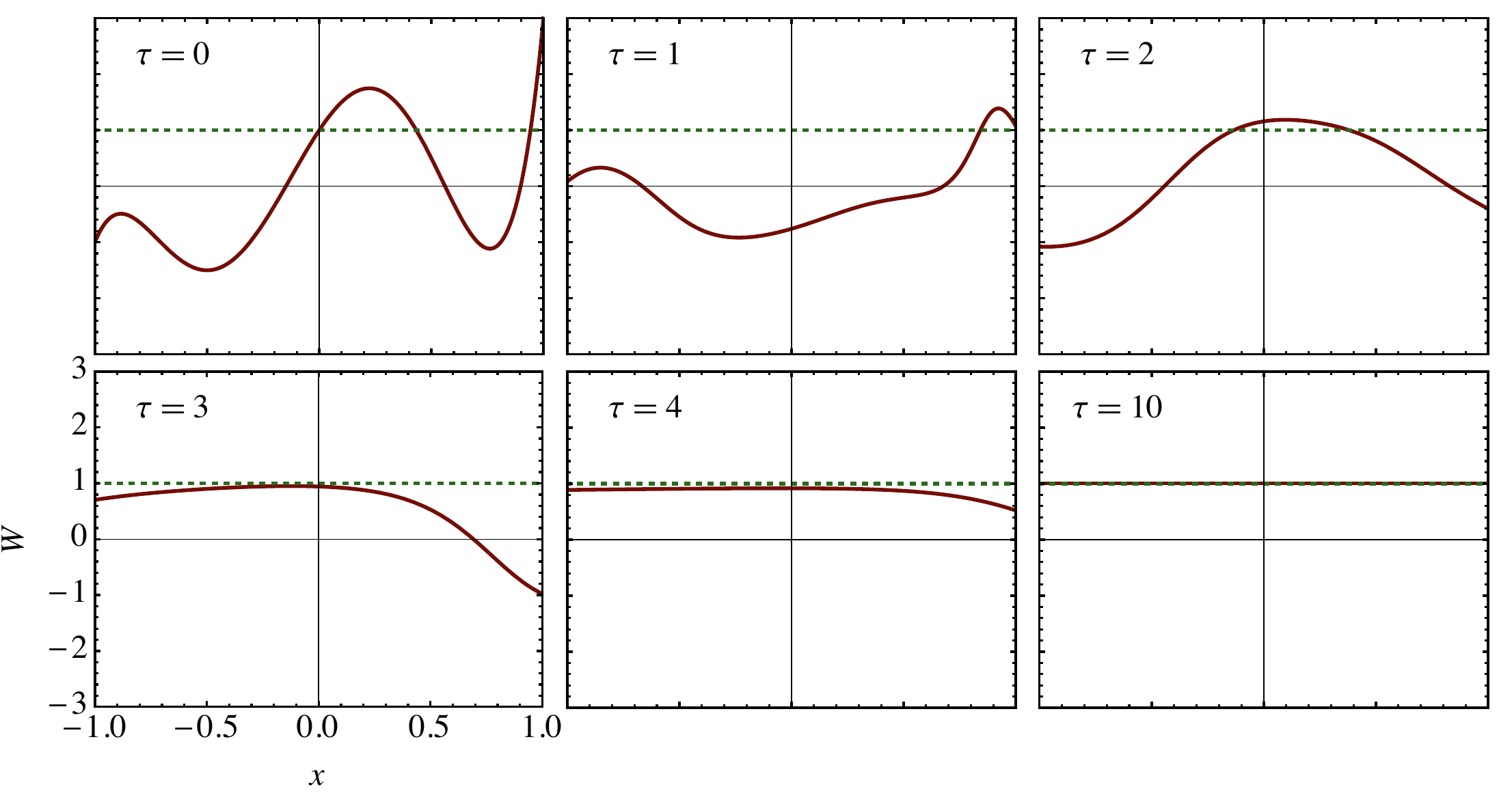}
\captionsetup{width=\textwidth}
\caption{\label{fig3} {\small Relaxation to $W_0$ for $\ell = 7/2$ and the initial data \eqref{ic0}.}}
\end{figure}

We find that the convergence to $W_0$ is determined by the least damped mode with the eigenvalue and eigenfunction
\begin{equation}\label{mode00}
\lambda_{0,0}^{+} =-\frac{1}{2}+ \frac{1}{2} \sqrt{1-8\ell(\ell+1)}\,,\quad u_{0,0}^{+}(x)=1\,,
\end{equation}
that is, for $\tau\rightarrow \infty$\footnote{The notation  $f(\tau) \sim g(\tau)$ for $\tau \rightarrow \infty$ means that $f(\tau)$ is ``asymptotically equivalent'' to $g(\tau)$, i.e. $\lim_{\tau \rightarrow \infty} f(\tau)/g(\tau)=1$.}
\begin{equation}\label{conW0}
   W(\tau,x)-W_0 \sim \mathrm{Re}\left(A_0 e^{\lambda_{0,0}^+ \tau}\right) + \mbox{subleading terms}\,,
\end{equation}
where $A$ is a constant depending on the initial data. This asymptotics can be formally derived using the Galerkin approximation of the infinite dimensional dynamical system \eqref{coeffeq}. To see this, note that the solution $W_0\equiv 1$ corresponds to the fixed point $a_k=\delta_k^0$ of the system \eqref{coeffeq}. Truncating this system at $N=2$ we obtain
\begin{subequations}
\begin{eqnarray}
 \ddot a_0 + \dot a_0 - \ell (\ell + 1)a_0 + \ell (\ell + 1) \left(a_0^3  + \frac{3}{2} a_0 a_1^2  \right) =  0,&& \\
 \ddot a_1 + 3 \dot a_1 + \left(2 - \ell (\ell + 1)\right) a_1 + \ell (\ell + 1) \left(3 a_0^2 a_1+\frac{3}{4} a_1^3\right) =  0.&&
\end{eqnarray}
\end{subequations}
For this system it is routine to show that the convergence to the fixed point  ($a_0=1,\dot a_0=0,a_1=0,\dot a_1=0$) for $\tau\rightarrow \infty$ has the form
\begin{subequations}
\begin{eqnarray}\label{fixed0}
  a_0(\tau) &\sim & 1+ \mathrm{Re}\left(A_0 e^{(-\frac{1}{2}+\frac{1}{2} \sqrt{1-8\ell(\ell+1)}) \tau}\right),\\
  a_1(\tau) &\sim & \mathrm{Re}\left(A_1 e^{(-\frac{3}{2}+\frac{1}{2} \sqrt{1-8\ell(\ell+1)})\tau}\right)\,.
\end{eqnarray}
\end{subequations}
Increasing the order $N$ of the Galerkin approximation does not affect the leading order asymptotics of $a_0(\tau)$ and $a_1(\tau)$ and the increasingly faster fall-off of higher coefficients can be calculated systematically term by term, therefore \eqref{conW0} follows.
The numerical verification of the  asymptotic behavior (40) is shown in Fig.~\ref{fig4}.

\begin{figure}[h!]
\includegraphics[width=0.49\textwidth]{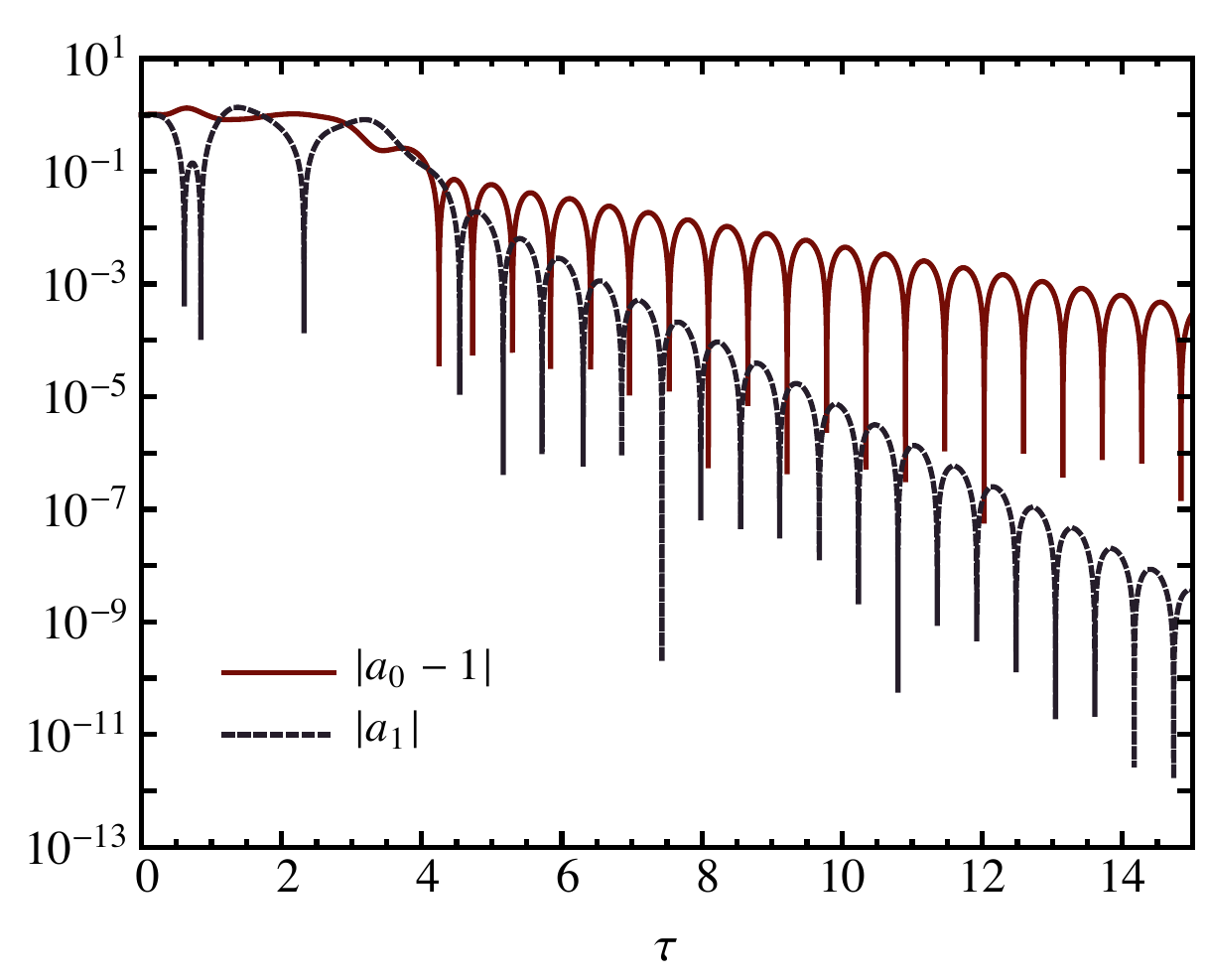}
\includegraphics[width=0.49\textwidth]{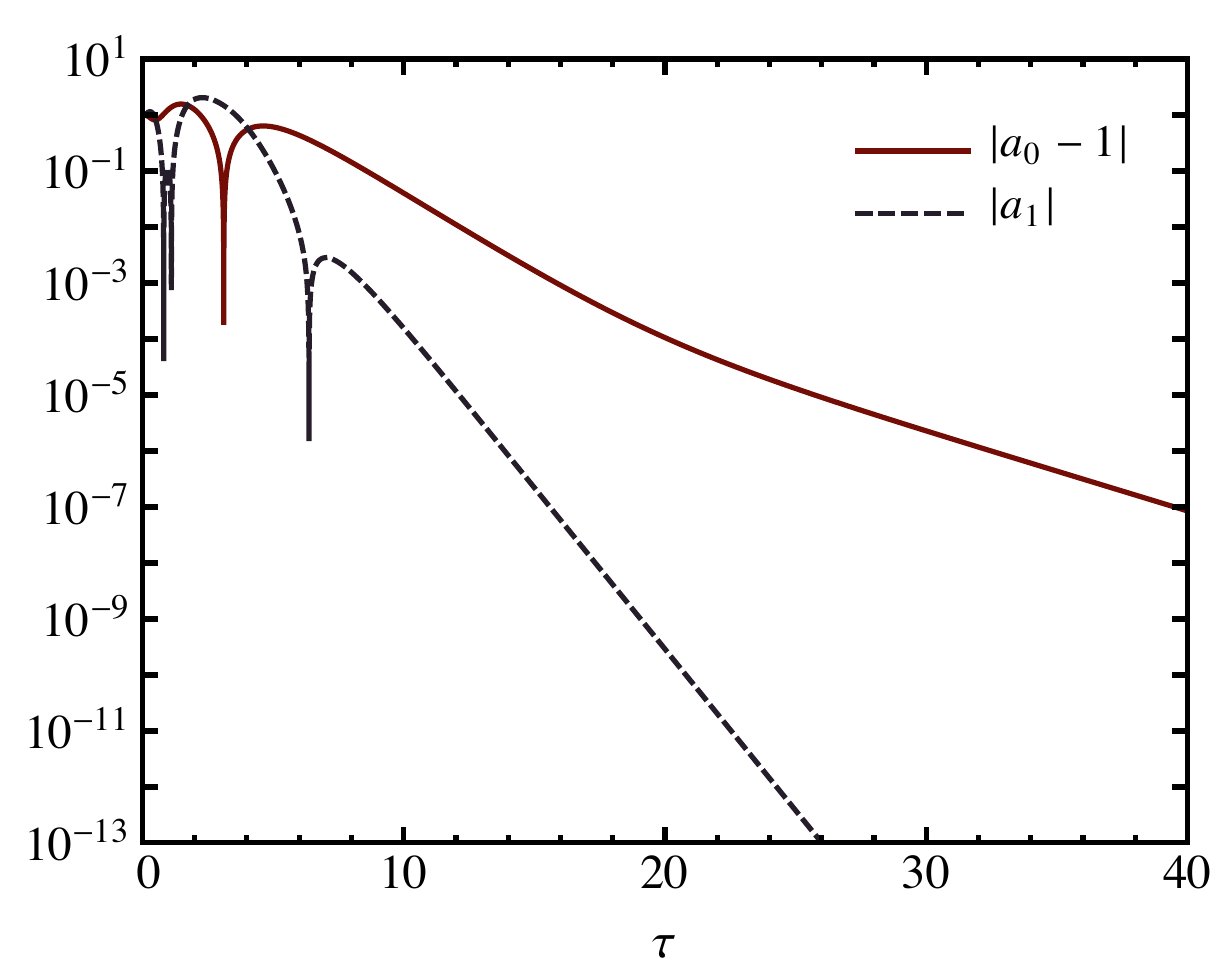}
\captionsetup{width=\textwidth}
\caption{\label{fig4} {\small Illustration of the asymptotic behavior (40). The plots depict
the quantities $|a_0(\tau) - 1|$ and $|a_1(\tau)|$ for the solution with initial data \eqref{ic0}. In the oscillatory case $\ell=7/2$ (left panel) both quantities
oscillate with the frequency $5\sqrt{5}/2$ and fall off as $e^{-\tau/2}$ and $e^{-3 \tau/2}$, respectively.
In the non-oscillatory case $\ell = 1/10$ (right panel) the quantities fall off as $e^{(-5 + \sqrt{3}) \tau /10}$ and $e^{(-15 + \sqrt{3}) \tau /10}$, respectively. Note that the asymptotic falloff  of $|a_0(\tau) - 1|$ becomes visible only for $\tau \gtrsim 25$.}}
\end{figure}

\subsection{Codimension-one attractors}
 For odd initial data $W_0$ cannot be an attractor (because it is even) and the role of an  attractor is taken by $W_*$ (if $\ell\leq 1$) or $W_1$ (if $\ell>1$). Each of these solutions has one unstable mode, but this mode is even so it does not participate in the evolution of odd initial data. More precisely, odd initial data lie on the stable manifold of $W_*$ if  $\ell<1$, the center-stable manifold of $W_*$ if $\ell=1$, and the stable manifold of $W_1$ if $\ell>1$. Below we describe in turn the relaxation to these attractors.

 \subsubsection{Relaxation to $W_*$ for $\ell \leq 1$.}
  \begin{figure}[h]
\includegraphics[width=\textwidth]{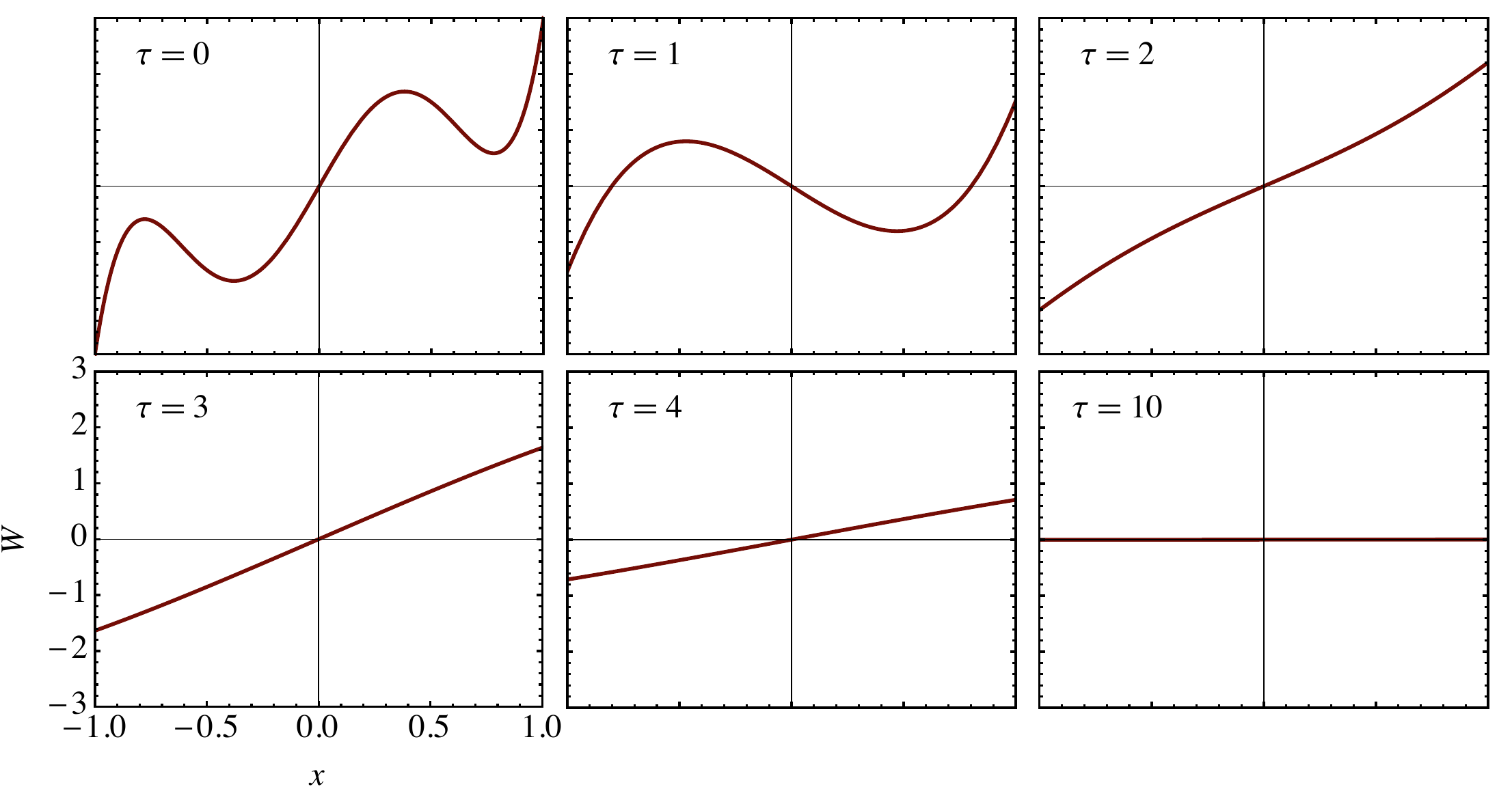}
\captionsetup{width=\textwidth}
\caption{\label{fig5} {\small Relaxation to $W_*$ for $\ell = 1/8$ and odd initial data \eqref{ic-odd}.}}
\end{figure}
 As follows from \eqref{eigen-w*} and \eqref{recurrence*}, the least damped mode has the  eigenvalue and eigenfunction
 \begin{equation}\label{mode0*}
\lambda_{*,1}^{+} =\ell-1\,,\quad u_{*,0}^{+}(x)=x\,.
\end{equation}
For $\ell<1$, in analogy to \eqref{conW0}, we have for $\tau\rightarrow \infty$
\begin{equation}\label{conW*}
   W(\tau,x)\sim A e^{\lambda_{*,1}^+ \tau}\,x + \mbox{subleading terms}\,.
\end{equation}
As above, this asymptotic behavior can be formally derived from the Galerkin approximation whose truncation to the first two odd modes reduces to
\begin{subequations}
\begin{align}
  \ddot a_1 + 3\dot a_1  + \left(2 - \ell (\ell + 1) \right) a_1 &+12 \dot a_3 + 6 a_3 \nonumber \\
+   &\frac{3}{4} \ell (\ell + 1) \left(a_1^3 + a_1^2 a_3  + 2 a_3^2 a_1 \right) =  0,
\end{align}
\begin{equation}
  \ddot a_3 +  7 \dot a_3  + \left(12 -\ell (\ell +  1) \right) a_3
+ \frac{1}{4} \ell (\ell  +  1) \left(a_1^3  +  6 a_1^2 a_3  +  3 a_3^3 \right)  =  0.
\end{equation}
\end{subequations}
Solving this system asymptotically  for $\tau\rightarrow \infty$ near the fixed point  $(a_1=\dot a_1=a_3=\dot a_3=0)$ we get
\begin{subequations}
\begin{eqnarray}\label{fixed*}
  a_1(\tau) &\sim &  A e^{(\ell-1) \tau},\\
  a_3(\tau) &\sim & -\frac{1}{8} \frac{1+\ell}{1+4\ell}\, A^3 e^{3(\ell-1) \tau}\,.
\end{eqnarray}
\end{subequations}
 Note that the leading fall-off of $a_3(\tau)$ is governed by the nonlinear term proportional to $a_1^3$ in Eq.~(43b).

For $\ell=1$ the eigenvalue $\lambda_{*,1}^{+}=\ell-1$ is equal to zero\footnote{In the framework of the self-adjoint problem \eqref{pert} this corresponds to the presence of the zero energy resonance at  the bottom of the continuous spectrum.}. In this case the coefficient of the term $a_1$ in Eq. (43a) vanishes and the decay changes from exponential to power-law, namely
\begin{subequations}
\begin{eqnarray}\label{fixed1}
  a_1(\tau) &\sim & \pm \frac{\sqrt{5}}{2}\,\tau^{-1/2} ,\\
  a_3(\tau) &\sim & \mp \frac{\sqrt{5}}{32}\,\tau^{-3/2}\,.
\end{eqnarray}
\end{subequations}
This nongeneric case is somewhat surprising as it contradicts a naive expectation that the decay is always exponential\footnote{A similar power-law decay occurs for other integer values of $\ell$ for specially prepared odd initial data.}.

Fig.~\ref{fig5} depicts the relaxation to $W_*$ for sample odd initial data
\begin{equation}\label{ic-odd}
   W(0,x) = 2 T_1(x) + T_5(x), \quad \partial_{\tau} W(0,x) = 0\,.
\end{equation}

Increasing the order of the Galerkin approximation does not affect the asymptotics (44) and (45).
The numerical verification of these formulae is shown in Fig.~\ref{fig6}.
\begin{figure} [h]
\includegraphics[width=0.48\textwidth]{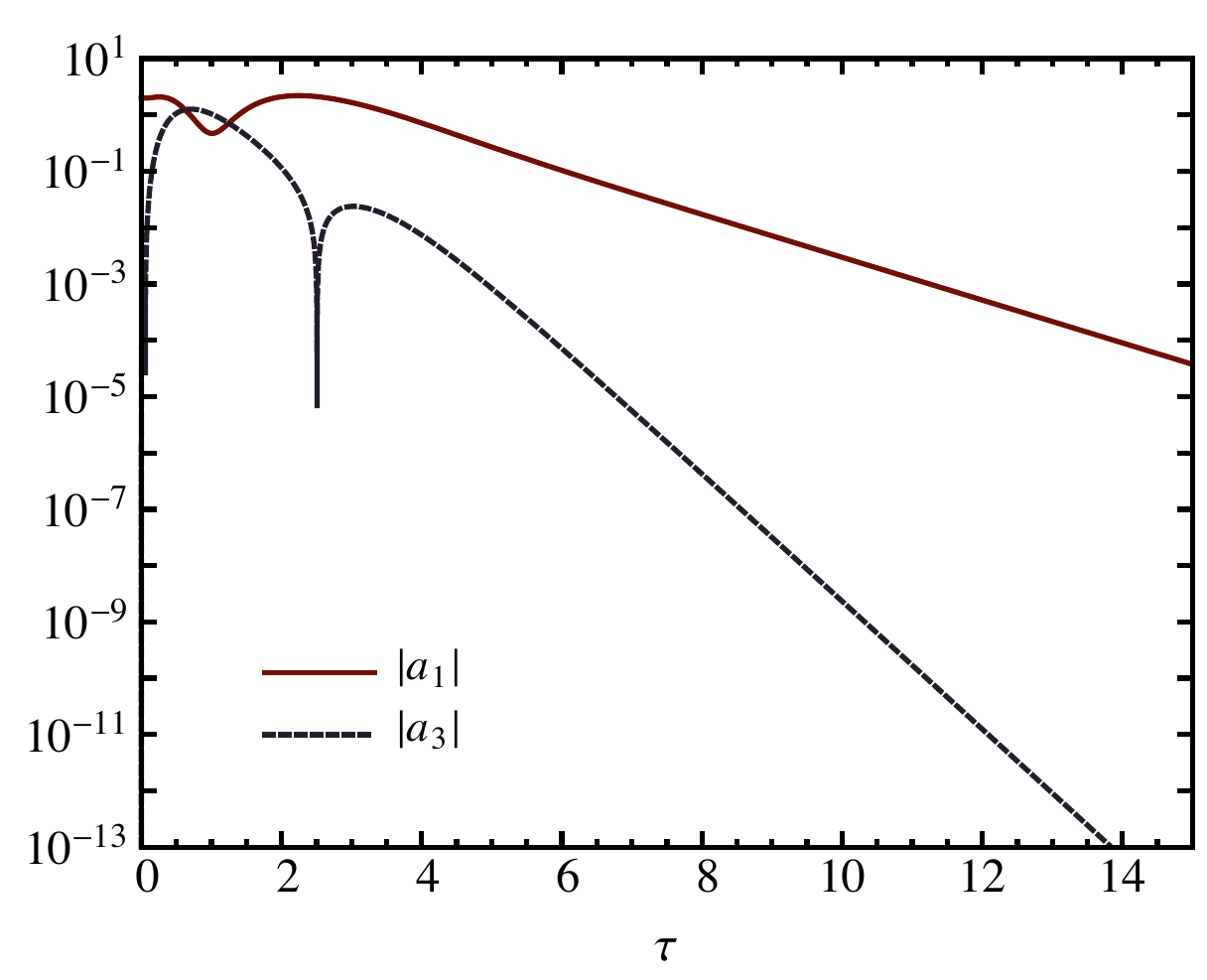}
\includegraphics[width=0.48\textwidth]{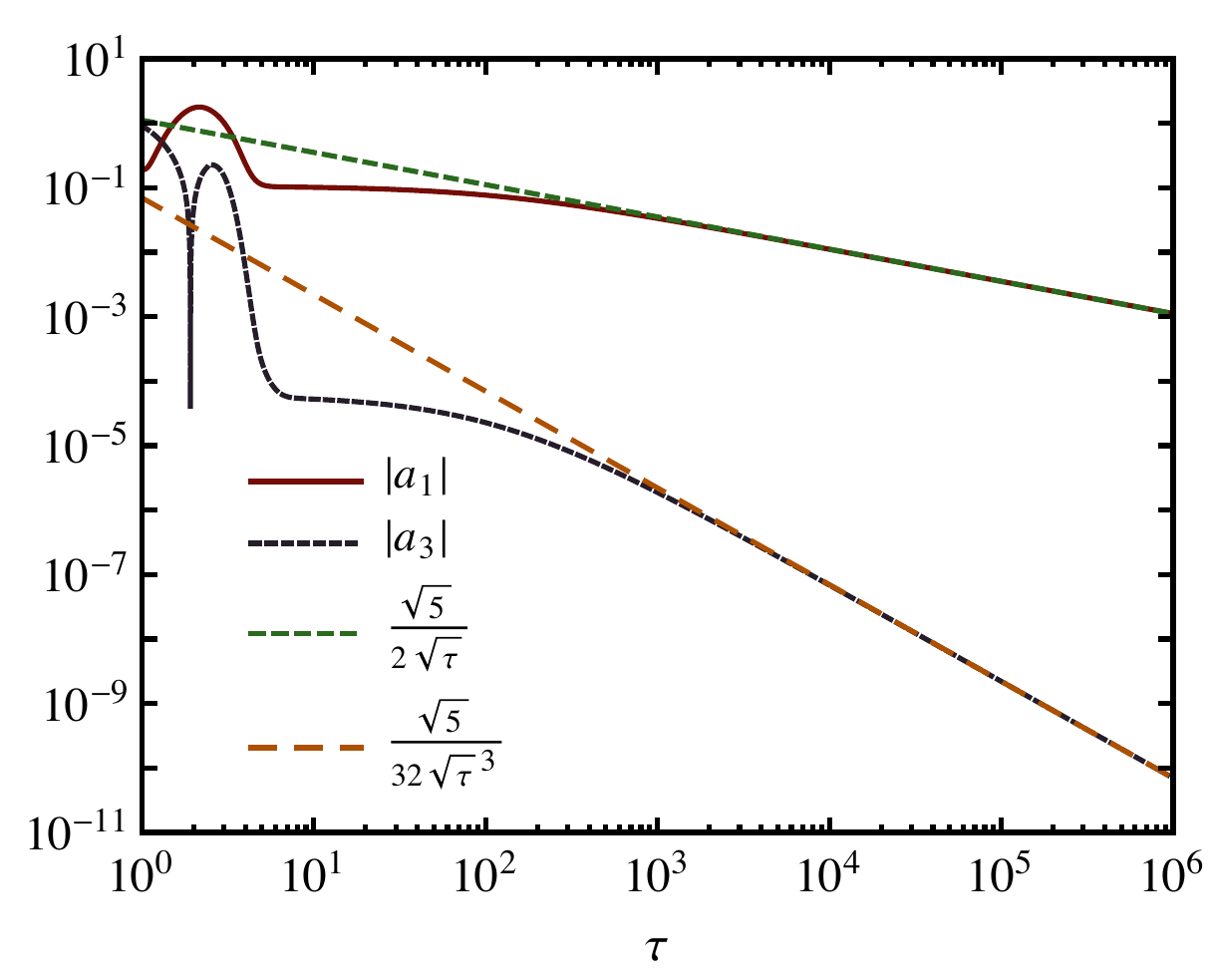}
\captionsetup{width=\textwidth}
\caption{\label{fig6} {\small Illustration of the asymptotics (44) and (45). The plots depict the coefficients $|a_1(\tau)|$ and  $|a_3(\tau)|$ for the solution with initial data \eqref{ic-odd}. The left panel shows the case $\ell = 1/8$. As expected, the coefficients $|a_1(\tau)|$ and $|a_3(\tau)|$ decay as $A e^{-7 \tau/8}$ and $(3 A^3/32) e^{-21 \tau/8}$, with $A \approx 18.8$. The right panel, using the double logarithmic scale, shows the nongeneric case $\ell = 1$. To make the predicted asymptotic power-law behavior of $|a_1(\tau)|$ and $|a_3(\tau)|$ better visible, we superimpose the graphs of $\sqrt{5}/(2 \sqrt{\tau})$ and $\sqrt{5}/(32 \sqrt{\tau}^3)$.}}
\end{figure}

 \subsubsection{Relaxation to $W_1$ for $\ell>1$.}
The single unstable mode around $W_1$ is even, hence it is not present in the evolution of odd initial data. Therefore, in this case we have
\begin{equation}\label{conW1}
   W(\tau,x)-W_1(x) \sim A e^{-\gamma_1 \tau}\, \sin(\omega_1 \tau+\delta)\,u_{1,1}(x)+...\,,
\end{equation}
where
$\gamma_1=-\mathrm{Re}(\lambda_{1,1})$, $\omega_1=\mathrm{Im}(\lambda_{1,1})$ and $A$, $\delta$ are constants.
Figs.~\ref{fig7} and \ref{fig8} show the numerical results confirming the asymptotic behavior  \eqref{conW1}  for $\ell=7/2$ and the initial data \eqref{ic-odd}.

\begin{figure} [h]
\includegraphics[width=\textwidth]{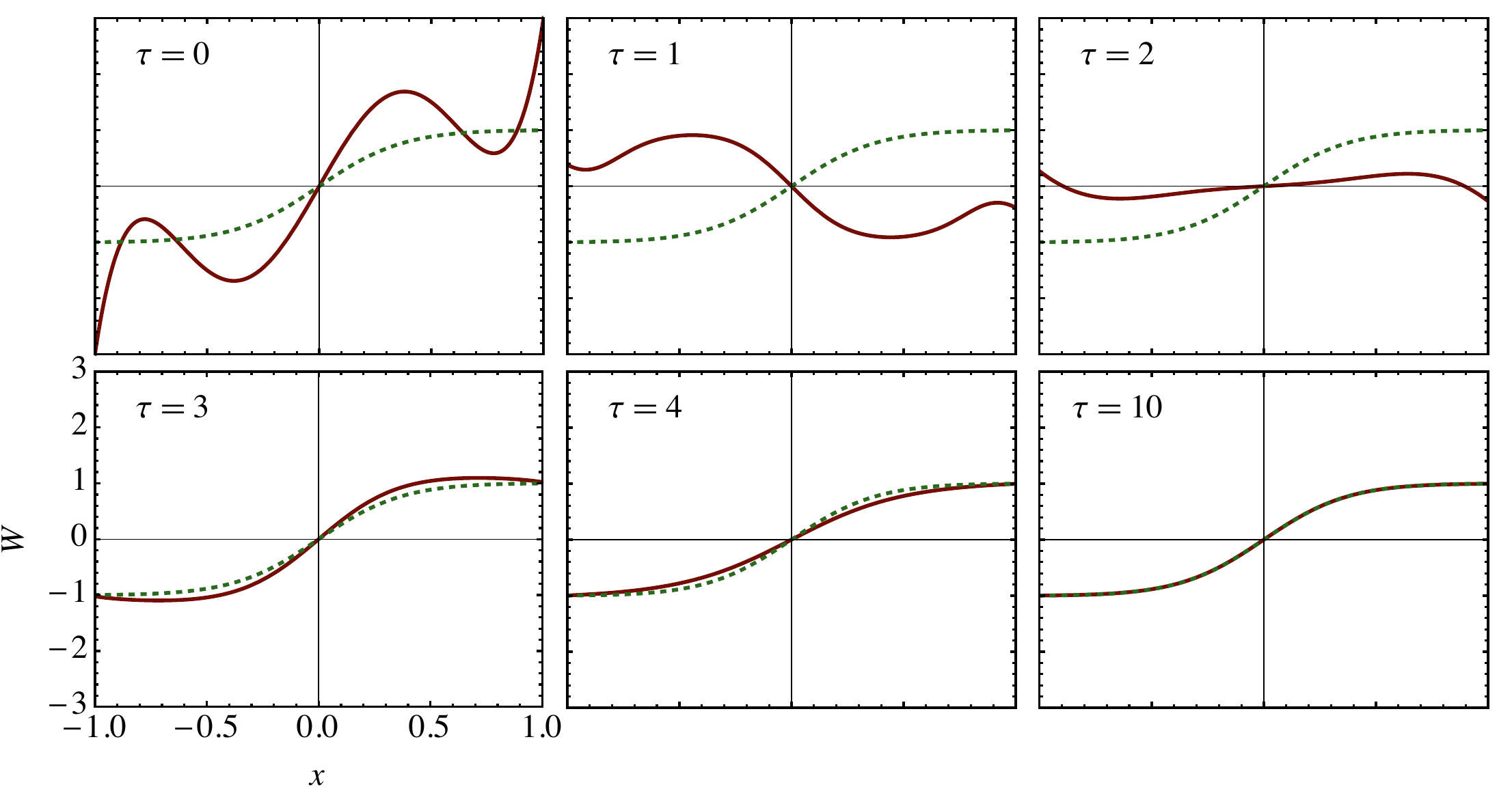}
\captionsetup{width=\textwidth}
\caption{\label{fig7} {\small Relaxation to $W_1$ (depicted with the dashed line) for $\ell = 7/2$ and the initial data \eqref{ic-odd}.}}
\end{figure}

\begin{figure} [h]
\includegraphics[width=0.5 \textwidth]{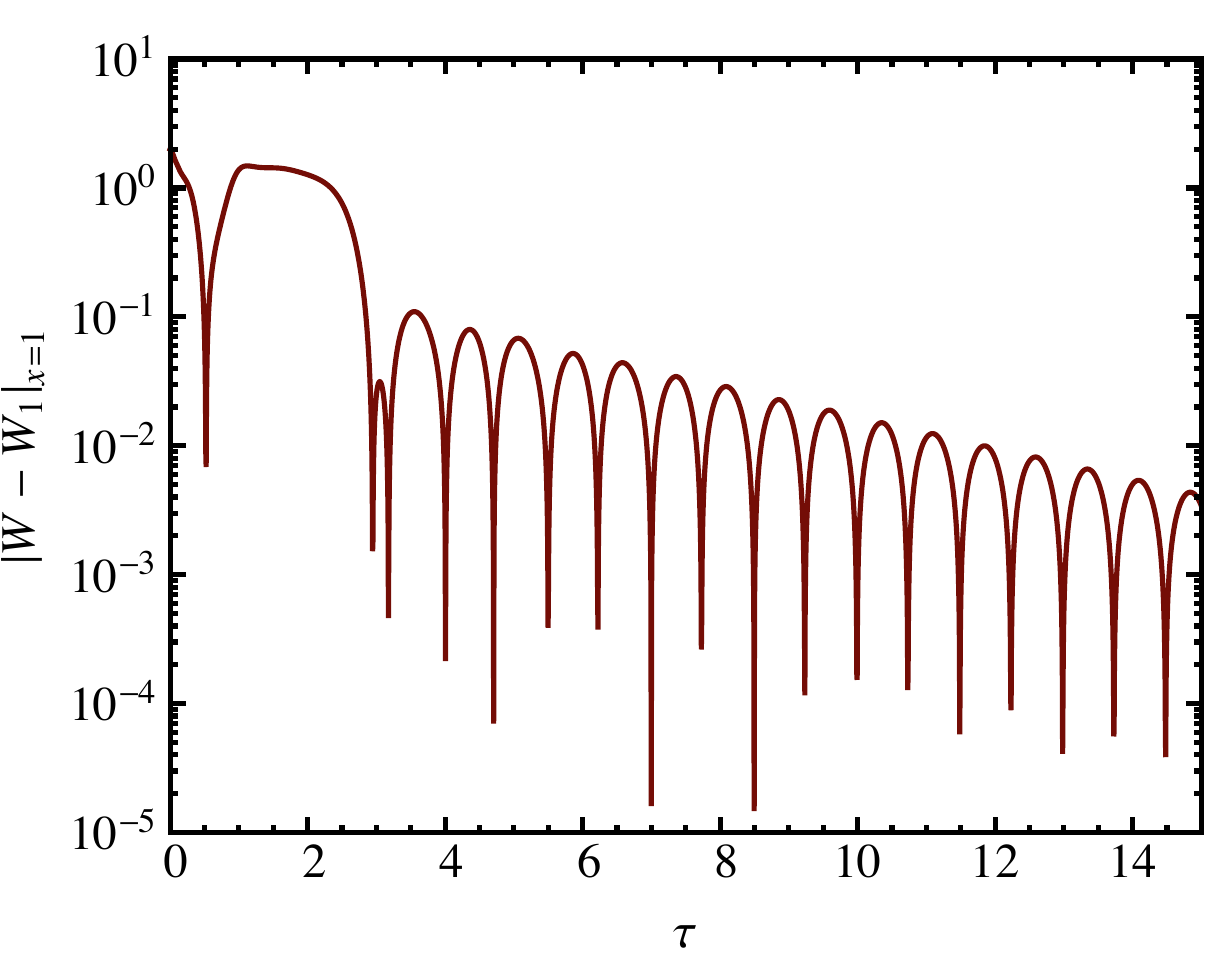}
\captionsetup{width=\textwidth}
\caption{\label{fig8} {\small Time evolution of $|W - W_1|$ at $x=1$ for the solution depicted in Fig.~\ref{fig7}.
The parameters of the quasinormal ringdown, $\omega_1\approx 4.197$ and $\gamma_1\approx 0.277$, agree with the values determined independently using linear perturbation theory.}}
\end{figure}

\subsection{Higher codimension attractors}
Thanks to the reflection symmetry $x\rightarrow -x$ of Eq.~\eqref{eq-sx} it is relatively easy to tune initial data to the stable manifolds of static solutions with two and three unstable modes.
For example, consider a one-parameter family of even initial data
\begin{equation}\label{critical-even}
 W(0,x) = -1 + c\, T_2(x), \quad \partial_{\tau} W(0,x) = 0\,.
 \end{equation}
These data interpolate between the basins of attraction of solutions $-W_0$ (for small values of $c$, say $c=0$) and $W_0$ (for large values of $c$, say $c=1$). Using bisection one can determine numerically a critical value $c_*\in [0,1]$ for which the initial data \eqref{critical-even} lie on a borderline between these basins of attraction. It is natural to expect that such data will evolve to the least unstable even solution, i.e. $W_*$ if $\ell<2$ or $W_2$ if $\ell>2$. Numerically $c_*$ cannot be determined exactly, and the corresponding near-critical solution approaches $W_*$ or $W_2$ (depending on the value of $\ell$), stays in its neighbourhood for some time and eventually is ejected out along the one-dimensional unstable manifold (see Figs.~\ref{fig9} and \ref{fig10}). The dynamics of this process can be approximated as follows
\begin{equation}\label{crit-w*}
   W(\tau,x)\sim A_0(c) e^{\ell \tau} + A_2 e^{-(2-\ell) \tau}\, (1-(2\ell-1)x),
\end{equation}
if $\ell<2$, and
\begin{equation}\label{crit-w2}
   W(\tau,x)\sim W_2(x) + A_0(c) e^{\lambda_{2,0} \tau}\, u_{2,0}(x) + A_2 e^{-\gamma_2 \tau}\, \sin(\omega_2 \tau+\delta)\,u_{2,2}(x),
\end{equation}
if $\ell>2$, where $\gamma_2=-\mathrm{Re}(\lambda_{2,2})$ and $\omega_2=\mathrm{Im}(\lambda_{2,2})$. For critical initial data $A_0(c_*)=0$, however in practice there is
always a small admixture of the unstable mode, and therefore $W_*$ and $W_2$ are only intermediate attractors whose lifetimes scale with $c$ as $-\frac{1}{\gamma} \ln|c-c_*|$, where $\gamma=\ell$ in the case of $W_*$ and $\gamma=\lambda_{2,0}$ in the case of $W_2$.

\begin{figure} [h]
\includegraphics[width=\textwidth]{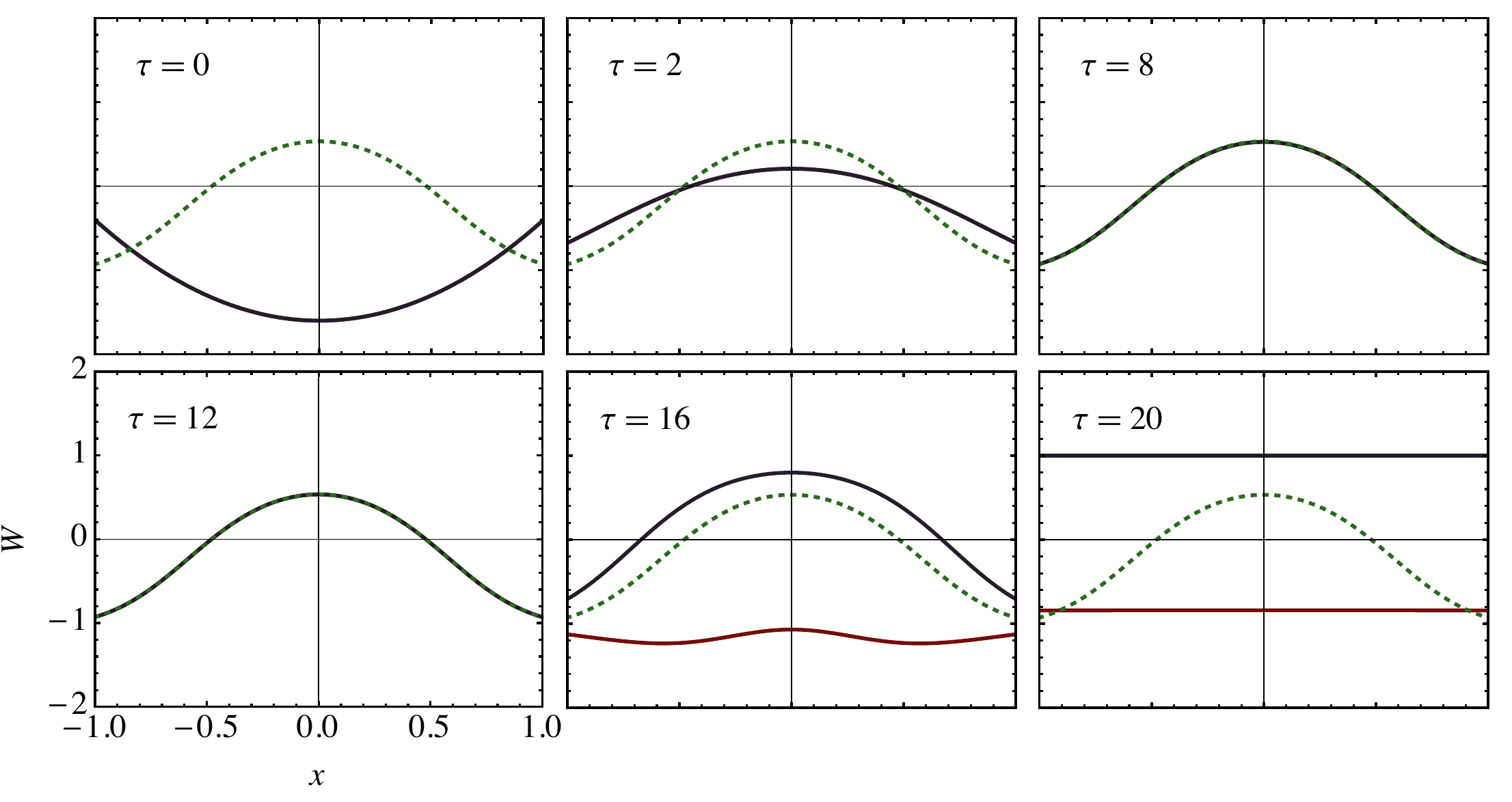}
\captionsetup{width=1.02\textwidth}
\caption{\label{fig9} {\small Snapshots from the evolution of a pair of near critical initial data
(\ref{critical-even}) for $c = c_{-}=0.599712304435598$ and $c~=c_{+}=~0.599712304435599$. By continuous dependence on initial data, these two solutions  evolve for some time close together approaching the unstable attractor $W_2$ (depicted with the dotted line) and eventually tend to $-1$ and $+1$, respectively.}}
\end{figure}
\begin{figure} [h]
\includegraphics[width=0.48\textwidth]{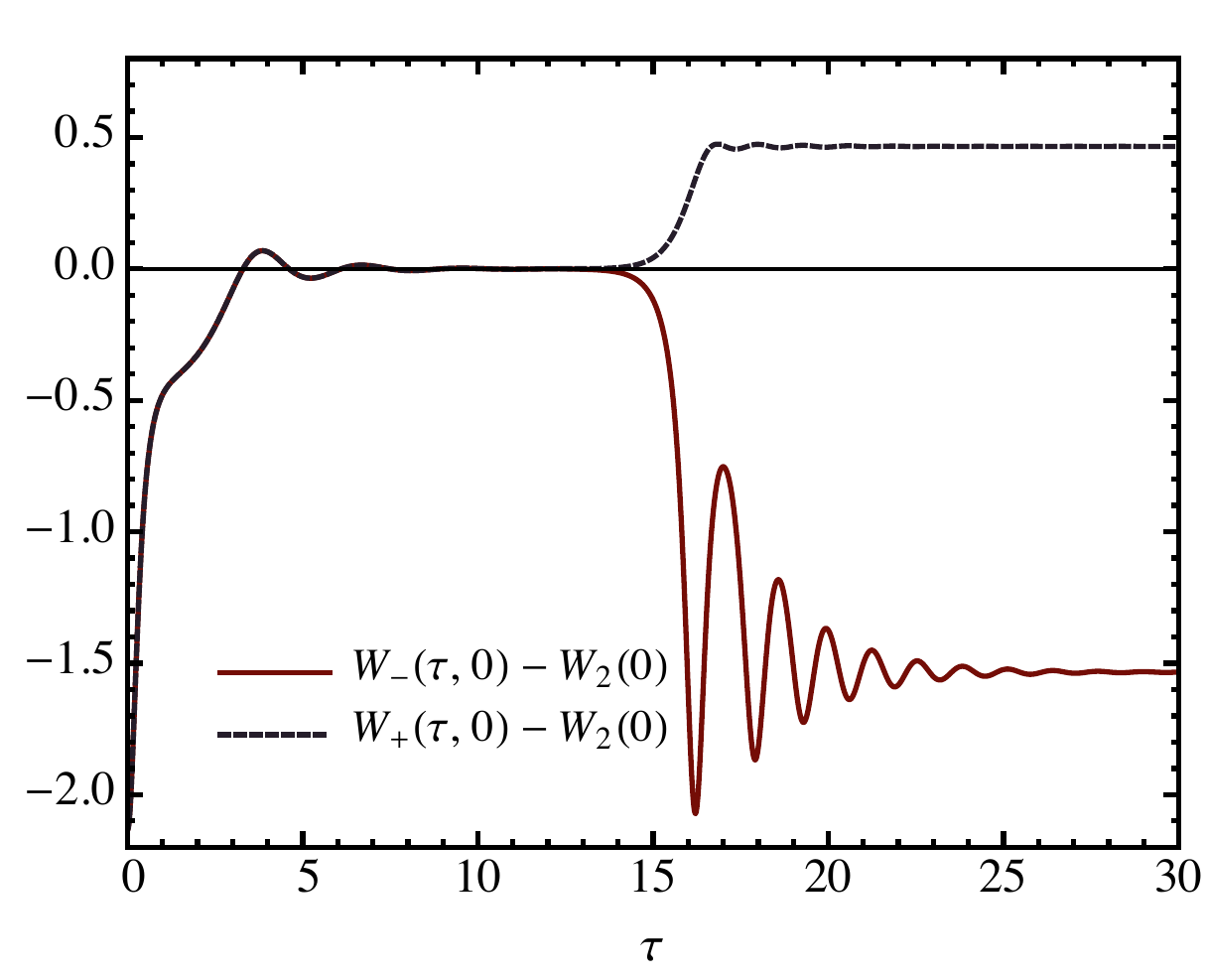}
\includegraphics[width=0.48\textwidth]{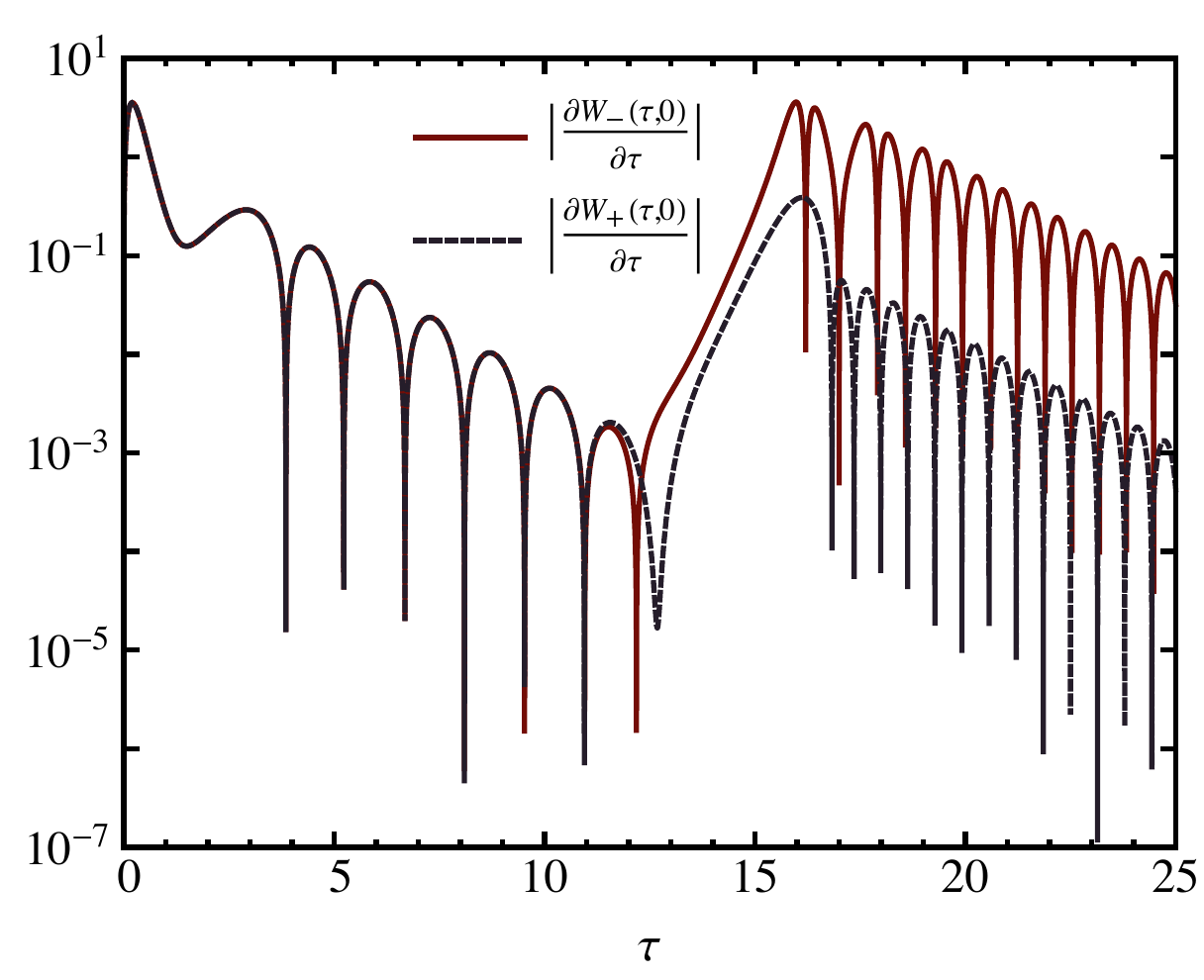}
\captionsetup{width=\textwidth}
\caption{\label{fig10} {\small The solutions corresponding to near critical parameters $c_{\pm}$, depicted in Fig.~\ref{fig9},  are denoted by $W_{\pm}$. The left and right panels show the time evolution of  $W_{\pm} - W_2$ and $|\partial_\tau W_{\pm}|$ at $x = 0$, respectively. One can clearly distinguish three phases of evolution: the approach to the unstable attractor $W_2$ governed by its fundamental quasinormal mode, the departure from $W_2$ governed by its unstable mode, and finally relaxation to $\pm 1$ governed by its fundamental quasinormal mode.}}
\end{figure}

In a similar manner,  one can prepare odd initial data lying on the stable manifold of $W_*$ for  $2<\ell<3$ or $W_3$ for $\ell>3$.

\section{Final remarks}
The study presented above is part of a bigger project which could be called ``Designer PDEs.'' In this project we play with domains of nonlinear PDEs (mainly wave equations) in order to design toy models for studying various physical phenomena
(such as relaxation to equilibrium, weak turbulence, blowup, etc.) in the \textit{simplest} possible settings. The studies of such toy models not only help to understand the underlying phenomena but, in addition, reveal interesting interactions between the geometry of a domain and a nonlinearity. For example, the comparison of dynamics of the Yang-Mills field on the asymptotically hyperbolic wormhole (described in this paper) and the asymptotically flat wormhole (described in the parallel paper \cite{bk})
is very instructive in showing how the late time behavior of waves is affected by the asymptotic behavior and the conformal structure of spacetime.

  Although Eq.~\eqref{eqym} is very special, we believe that it is representative for a larger class of models in the sense that many of its features, discussed above, are structurally stable with respect to perturbations of the geometry of a domain. We hope that our toy model will serve as  a playground for developing new mathematical methods (in particular,  tools based on hyperboloidal foliations)   for semilinear wave equations on asymptotically hyperbolic spacetimes.

\subsubsection*{Acknowledgments.} PB thanks Helmut Friedrich for helpful discussions.  PM acknowledges the hospitality of the A. Einstein Institute in Golm where this work was initiated. This research was supported in part by the NCN grant NN202 030740 and the Polish Ministry of Science and Higher Education grant 7150/E-338/M/2014.

\section*{Appendix: Fourier-Galerkin method}

To solve Eq.~\eqref{eq-sx} we use a version of the spectral Fourier-Galerkin method.
The solution is expanded in the basis of Chebyshev polynomials of the first kind $\{ T_n \}_{n=0,1,\dots}$ as
\begin{equation}
\label{ex}
W(\tau,x) = \sum_{n = 0}^{\infty} a_n(\tau) T_n(x).
\end{equation}
The choice of Chebyshev polynomials is suggested by the form of the Chebyshev equation
\[ (1 - x^2) T_n^{\prime \prime}(x) - x T_n^\prime(x) + n^2 T_n(x) = 0. \]
The other obvious choice would be to work with Legendre polynomials. They seem to be even better adjusted to our needs, since the linear part of the right hand side of Eq.~(\ref{eq-sx}) has the form of the Legendre equation. The reason why we do not work with Legendre polynomials is that the formulae expressing the products of Legendre polynomials as their linear combinations are relatively complicated (cf.~\cite{neumann, adams}). The advantage of using Chebyshev polynomials is that the product of two Chebyshev polynomials can be expressed as
\[ T_m (x) T_n (x) = \frac{1}{2} \left( T_{m+n} (x) + T_{|m - n|} (x) \right), \]
and thus the triple product has the form
\begin{eqnarray*}
T_l (x) T_m (x) T_n (x) & = & \frac{1}{4} \left( T_{l+m+n} (x) + T_{|l - m - n|} (x) + T_{l + |m - n|} (x) \right. \\
&& \left. + T_{|l - |m - n||} (x) \right).
\end{eqnarray*}
Using the above simple formula, we can expand the nonlinear term $W^3$ in Eq.~(\ref{eq-sx}) as follows
\begin{eqnarray} \label{T3}
\left( W(\tau,x) \right)^3 & = & \sum_{l,m,n=0}^{\infty} a_l(\tau) a_m (\tau) a_n (\tau) T_l (x) T_m (x) T_n (x) \nonumber \\
&= & \sum_{n=0}^{\infty} w_n(\tau) T_n (x),
\end{eqnarray}
where
\begin{eqnarray}\label{w0}
w_0 (\tau)  &:= & \frac{1}{4} \left(a_0(\tau) \right)^3 + \frac{1}{4} \sum_{m=0}^{\infty} a_0 (\tau) \left(a_m (\tau) \right)^2 \nonumber \\
& + &\frac{1}{4} \sum_{m,n=0}^{\infty} \left(a_{m+n}(\tau) + a_{|m - n|}(\tau) \right) a_m (\tau) a_n(\tau)
\end{eqnarray}
and, for $k > 0$,
\begin{align}\label{wk}
& w_k(\tau)  :=  \frac{1}{4} \sum_{m,n=0}^{\infty} \left(a_{k - m - n}(\tau) + a_{k - |m - n|}(\tau) + a_{k + m + n}(\tau) \right.
 \nonumber \\
  & +  \left. a_{-k + m + n}(\tau) + a_{k + |m - n|}(\tau) + a_{-k + |m - n|}(\tau) \right) \,a_m(\tau)  a_n(\tau).
\end{align}
Here it is assumed implicitly that all coefficients with negative indices vanish identically.

\begin{figure}[ht]
\includegraphics[width=0.48\textwidth]{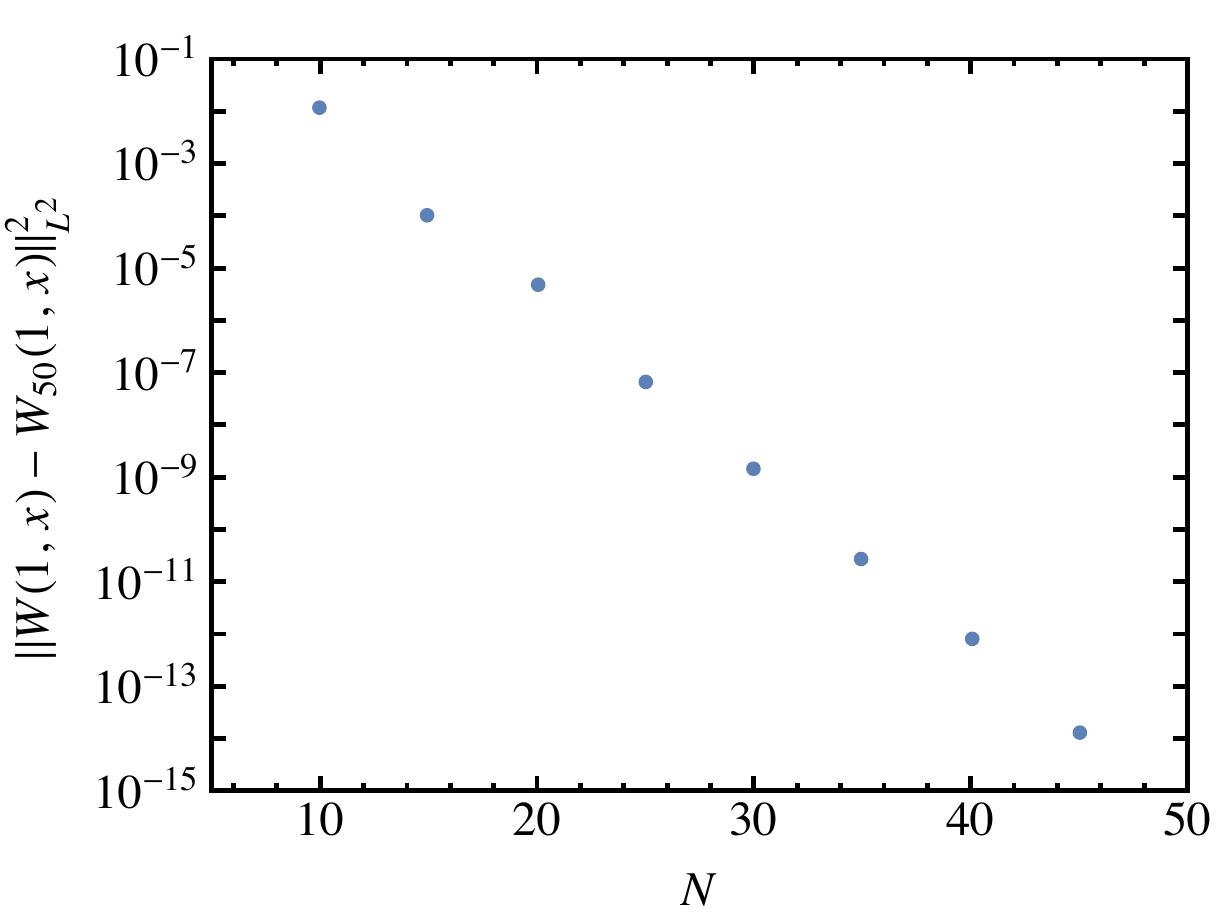}
\captionsetup{width=\textwidth}
\caption{\label{fig11} {\small A sample convergence test of our numerical scheme. The abscissa shows the index $N$ of the highest Chebyshev polynomial used in the computation. The ordinate shows the square of the $L^2([-1,1])$ norm of the difference between two solutions at time $\tau = 1$: the numerical solution computed with the given number of Chebyshev polynomials and the same solution computed with $N = 50$ (this solution is denoted as $W_{50}$). The initial data in this example are given by Eq.~(\ref{ic-odd}).}}
\end{figure}

In order to deal with the terms involving $x \partial_x W$ and $x \partial_{\tau x} W$ in Eq.~(\ref{eq-sx}) we recall that
\[ x T_n^\prime (x) = \left\{ \begin{array}{ll}
n \left( 2 \sum_{j=0}^{n/2} T_{2j} (x) - T_0(x) - T_n(x) \right), & n = 0,2,4,\dots,\\
n \left( 2 \sum_{j=0}^{(n-1)/2} T_{2j+1} (x) - T_n (x) \right), & n = 1,3,5,\dots
\end{array} \right. \]
Thus, defining
\[ z_0(\tau) := \sum_{j=0}^{\infty} 2j a_{2j}(\tau) \]
and
\[ z_k(\tau) := k a_k(\tau) + 2 \sum_{j=1}^{\infty} (k+2j) a_{k+2j}(\tau),  \quad k > 0, \]
one can write
\begin{equation}\label{xTprime}
 x \sum_{n=0}^{\infty} a_n(\tau) T^\prime_n (x) = \sum_{n = 0}^{\infty} z_n(\tau) T_n (x).
 \end{equation}

Inserting expansion (\ref{ex}) into Eq.~(\ref{eq-sx}) and using \eqref{T3} and \eqref{xTprime}, we obtain an infinite  system of ordinary differential equations of the form
\begin{equation}
\label{coeffeq}
\ddot a_n + \dot a_n + (n^2 - \ell (\ell + 1)) a_n + z_n + 2 \dot z_n + \ell (\ell + 1) w_n = 0\,
\end{equation}
for $n=0,1,\dots$ In numerical computations we truncate this infinite system  at some prescribed $n=N$ and retain the first $N+1$ coefficients $a_0,a_1,\dots,a_N$.
 The truncated finite-dimensional  dynamical system  can be solved using standard numerical methods.  This procedure is usually referred to as the Galerkin method. The initial data consist of a set of $2N + 2$ values $\{ a_n(0), \dot a_n(0) \}_{n = 0, \dots, N}$.   In the computations presented in Section~5  we set $N$ of the order of 30 to 50.

The convergence of our spectral numerical scheme is, as expected, exponential, at least in the cases that we have tested so far. An example of the convergence properties (with respect to the increasing number on Chebyshev polynomials used in the computation) is shown in Fig.~\ref{fig11}.

All numerical examples shown in this paper were computed with \textit{Wolfram Mathematica}. A sample \textit{Mathematica} notebook containing the implementation of the above method is attached to this paper.

\end{document}